\theoremstyle{plain}
\newtheorem{thm}{\textrm{Theorem}}[section]
\newtheorem*{thm*}{\textrm{Theorem}}
\newtheorem{lem}[thm]{\textrm{Lemma}}
\newtheorem{cor}[thm]{\textrm{Corollary}}
\newtheorem{prop}[thm]{\textrm{Proposition}}
\newtheorem*{prop*}{\textrm{Proposition}}
\newtheorem*{cor*}{\textrm{Corollary}}
\theoremstyle{definition}
\newtheorem{example}[thm]{\textrm{Example}}
\newtheorem{defi}[thm]{\textrm{Definition}}
\newtheorem{rem}[thm]{\textrm{Remark}}
\newcommand{\I}{\newblock{\rm{I}}}
\newcommand{\II}{\newblock{\rm{II}}}
\newcommand{\Int}{\newblock{\rm{Int}}}
\title{\small Non-singular extensions of horizontal stable fold maps from surfaces to the plane}
\author{Koki Iwakura}
\date{}
\begin{document}

\maketitle
\vspace{-5pt}

\begin{abstract}
In this paper, we study the non-singular extension problem of horizontal stable fold maps. 
This problem asks what conditions ensure the existence of a submersion whose restriction to the boundary coincides with a given map, called a non-singular extension. 
By defining a combinatorial object called a pairing map, we prove that the existence of a non-singular extension is equivalent to the existence of a pairing map. 
Furthermore, to facilitate the application of the main theorem, we compute the Euler characteristics and the fundamental groups of compact $3$-dimensional manifolds that serve as the source manifolds of non-singular extensions. 
\end{abstract}
\vspace{-5pt}

\section{Introduction}
In the field of \emph{singularity theory} of differentiable maps, we investigate the properties of singular points of smooth maps between smooth manifolds.  
This field traces its origins to the pioneering works of Morse in the 1940s and Whitney and Thom in the 1950s.
\vspace{1pt}

The theory has expanded and revealed properties of singular points of maps. 
In particular, the study of how singular points influence the global structure of manifolds is known as \emph{global singularity theory}.
This area has been significantly developed by researchers such as Saeki, Sakuma, Sz\H{u}cs, and others, yielding notable results and insights.
\vspace{1pt}

In this paper, we focus on a specific global property of maps, called ``non-singular extendability."
This is a property of a map that can be extended to a submersion, which is the simplest object in the sense of a singularity, on a manifold of one higher dimension. 
The problem of finding necessary and sufficient conditions for maps is called the \emph{non-singular extension problem}. 
For Morse functions, this problem has been considered in several papers up to the present day. 
However, the non-singular extension problem for stable maps, which are generalizations of Morse functions, has not been considered until now. 
In this paper, we propose a method to determine the non-singular extendability of horizontal stable fold maps (\textrm{Definition\;2.2}) from a closed orientable surface to $\mathbb{R}^2$ in \textrm{Theorem\;4.1}. 
Before providing the contents of this paper, we introduce some previous research on the non-singular extension problem for Morse functions. 
\vspace{1pt}

The non-singular extension problem for real-valued Morse functions of $1$-dimensional manifolds was first studied by Blank--Laudenbach\;\cite{BL}. 
Their paper initiated further research on the generalization of source manifolds as follows.
The problem for Morse functions of closed orientable surfaces was studied by Curley\;\cite{C}.
Curley used a combinatorial object called an allowable collapse to state the existence of non-singular extensions. 
Curley proved that the existence of an allowable collapse is equivalent to the existence of a non-singular extension. 
Furthermore, Curley also investigated detailed conditions of an allowable collapse under which a Morse function of $S^{2}$ can be extended to a submersion of $D^{3}$. 
Laroche\;\cite{L} studied the problem for Morse functions of closed non-orientable surfaces and derived a necessary and sufficient condition by extending Curley's definition of allowable collapses to ensure compatibility with Laroche's condition.
Laroche also studied the detailed conditions of an allowable collapse under which a Morse function of a Klein bottle can be extended to a submersion of a solid Klein bottle. 
Furthermore, Iwakura\;\cite{Iwakura1} studied the non-singular extension problem for circle-valued Morse functions of closed orientable surfaces. 
\vspace{1pt}

Blank, Laudenbach, Curley, Laroche, and the author studied the problem of Morse functions of low dimensional manifolds. 
The non-singular extension problem for Morse functions of higher dimensional source manifolds was also investigated. 
Barannikov\;\cite{B} examined conditions under which Morse functions of spheres with arbitrary dimension can be extended to a submersion of disks and obtained necessary conditions regarding combinatorial properties of Morse complexes.
Furthermore, Seigneur\;\cite{S} studied the same problem and obtained necessary conditions in terms of algebraic properties of Morse complexes.
\vspace{1pt}

We briefly introduce the structure of this paper. 
In Section\;2, we provide several preliminary notions, including the definition of a combinatorial object called a pairing map (\textrm{Definition\;2.13}), which is essential for stating our main result, \textrm{Theorem\;4.1}. 
In Section\;3, we provide examples of pairing maps. 
In Section\;4, we state and prove \textrm{Theorem\;4.1}, which establishes that a non-singular extension of a given map exists if and only if a pairing map exists. 
In Section\;5, we compute the Euler characteristics and the fundamental groups of compact orientable $3$-dimensional manifolds with non-empty boundary that serve as the source manifolds of non-singular extensions.
\vspace{1pt}

Throughout this paper, manifolds and maps between them are assumed to be of class $C^{\infty}$ unless otherwise specified.

\section{Preliminaries}
In this section, we will prepare some necessary notations to state \textrm{Theorem\;4.1}, which is the main theorem of this paper.  
We assume that $\mathbb{R}^{2}$, as the target manifold, is given a right-handed (or the standard) orientation and that oriented manifolds with boundary induce the orientation on the boundary by the outward first convention.  
\vspace{1pt}

To introduce a horizontal stable fold map, we first introduce several preliminary notions. 
Let $M$ be a closed surface. 
We denote by $C^{\infty}(M,\mathbb{R}^{2})$ the set of smooth maps of class $C^{\infty}$ from $M$ to $\mathbb{R}^{2}$, equipped with the Whitney $C^{\infty}$ topology (see \cite{GG} for the definition).
For $f\in C^{\infty}(M,\mathbb{R}^{2})$, we define the subset $S(f)$ of $M$ by
\begin{equation*}
S(f)=\{p\in M~|~\textrm{rank}\;df_{p}<2\},
\end{equation*}
which is called the \emph{singular point set} of $f$, and a point of $S(f)$ is called a \emph{singular point} of $f$.
A point $p\in S(f)$ is called a \emph{fold point} of $f$ if there exist local coordinates $(x_{1},x_{2})$ of $M$ around $p$, and local coordinates $(y_{1},y_{2})$ of $\mathbb{R}^{2}$ around $f(p)$ such that $f$ is written in these charts as 
\begin{equation*}
(x_{1},x_{2})\mapsto (y_{1},y_{2})=(x_{1}^{2},x_{2}).
\end{equation*}
A map whose singular point set consists only of fold points is called a \emph{fold map}.
\vspace{1pt}

For maps $f, f'\in C^{\infty}(M,\mathbb{R}^{2})$, we say that $f$ is \emph{right-left equivalent} to $f'$ if there exist diffeomorphisms $\Phi\colon M\to M$ and $\Psi\colon\mathbb{R}^{2}\to\mathbb{R}^{2}$ such that $f'=\Psi\circ f\circ\Phi^{-1}$. 
A map $f$ is called a \emph{stable map} if there exists an open neighborhood $U_{f}$ of $f$ in $C^{\infty}(M,\mathbb{R}^{2})$ such that $f$ is right-left equivalent to each element in $U_{f}$ (see \cite{GG} for more details). 
In particular, stable maps having only fold points as the singular points are called \emph{stable fold maps}.

\begin{rem}
For a stable fold map $f$, the restriction $f|_{S(f)}$ to the singular point set $S(f)$ is an immersion with normal crossings. 
For more details, see \cite{GG}. 
\end{rem}

\begin{defi}[\textbf{Horizontal}]
Let $M$ be a closed surface and $f\colon M\to \mathbb{R}^{2}$ be a map. 
Then, $f$ is said to be \emph{horizontal} if $f$ is injective on each component of $M\setminus S(f)$.
In particular, if $f$ is a stable fold map, then we call $f$ a \emph{horizontal stable fold map}.
\end{defi}

We next introduce the precise definition of non-singular extensions. 

\begin{defi}[\textbf{Non-singular extension}]
Let $M$ be a closed orientable surface and $g\colon M\times[0,1)\to\mathbb{R}^{2}$ be a submersion such that $g|_{M\times\{0\}}$ is a stable map.
We assume that there exist a compact orientable $3$-dimensional manifold $N$ without closed components and a submersion $F\colon N\to\mathbb{R}^{2}$ that makes the following diagram commutative:
\[
  \xymatrix{
    M\times[0,1) \ar[r]^{~~~~g} \ar[d]_{i\;} &\mathbb{R}^{2} \\
    N, \ar[ru]_{F}
  }
\]
where $i$ is a collar of $\partial N$ identifying $M\times\{0\}$ with $\partial N$.
Then, $F$ is called a \emph{non-singular extension} of $g$. 
\end{defi}

\begin{rem}
In the above definition, we assume that the map $g$ from $M\times[0,1)$ to $\mathbb{R}^2$ is given. 
This is because, when considering extensions of a given map from closed surfaces to the plane, it is first necessary to extend to the collar neighborhood of the boundary. 
Since maps of closed surfaces into the plane can often be extended to a submersion on the product of surfaces and $[0,1)$, we also assume that $g$ has no singular points. 
For example, a fold map from a closed surface to $\mathbb{R}^2$ has an immersion lift by Haefliger's lifting theorem\;\cite{Hae}; thus, such an extension is possible. 
\end{rem}

\begin{figure}[t]
\centering
\includegraphics[width=130mm]{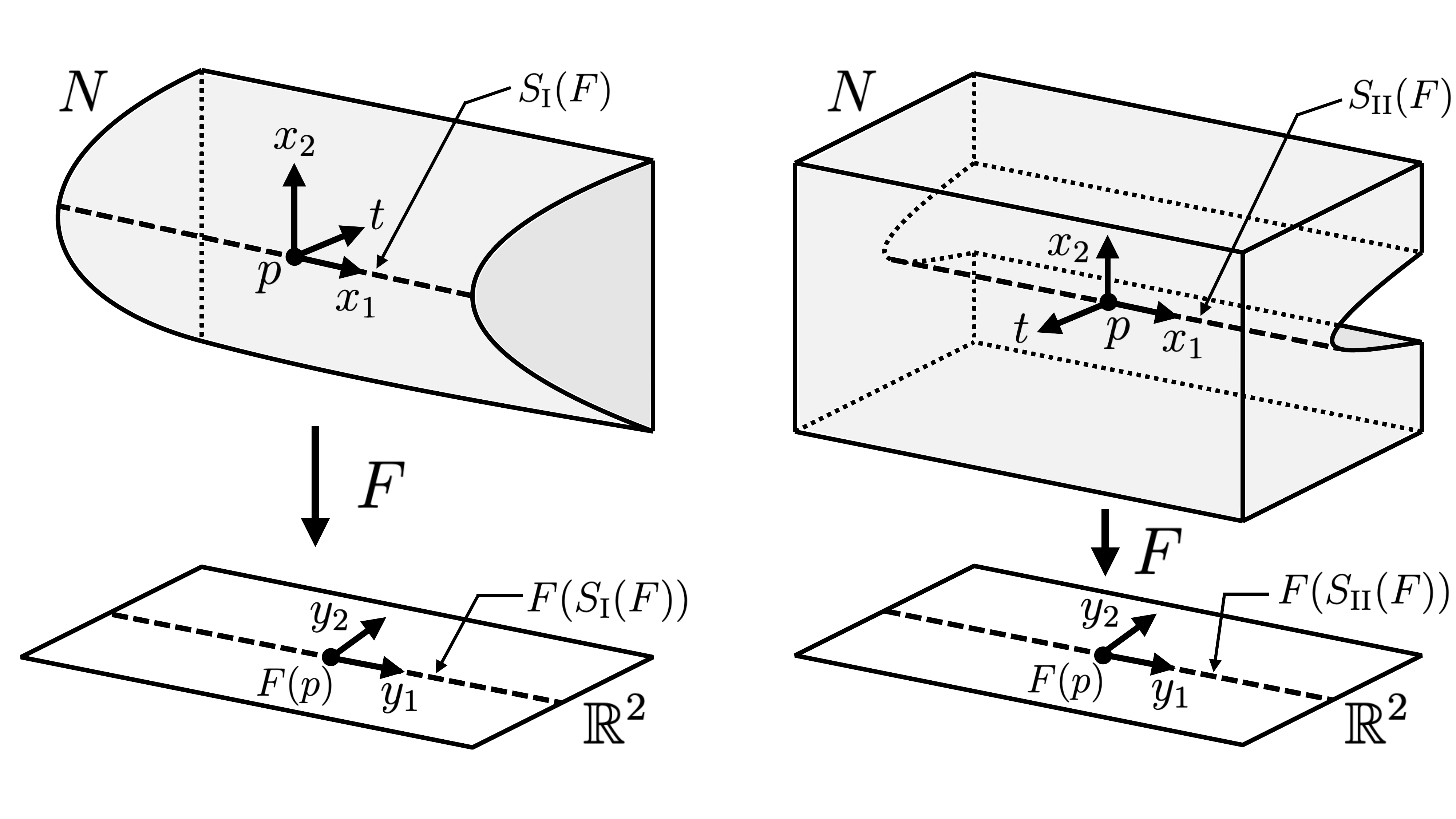}
\vspace{-4pt}
\caption{The figure on the left- (resp.\ right-) hand side represents $F$ around $p\in S_{\I}(F)$ (resp.\ $p\in S_{\II}(F)$).
}
\end{figure}

We next give a proposition that is used frequently in this paper. 
That has been proved by Shibata \cite[Proposition 1]{Shi}.

\begin{prop}
Let $N$ be a compact $3$-dimensional manifold with non-empty boundary, and $F\colon N\to\mathbb{R}^{2}$ be a submersion, which is a stable fold map on $\partial N$. 
Let $p\in\partial N$. 
Then, there exist local coordinates $(t, x_{1},x_{2})$ of $N$ around $p$ and $(y_{1},y_{2})$ of $\mathbb{R}^{2}$ around $F(p)$ such that $F$ satisfies one of the following around $p$:
\begin{equation*}
F\colon (t,x_{1},x_{2})\mapsto (y_{1},y_{2})=
\begin{cases}
(x_{1}, x_{2}), & p:\text{regular point of }F|_{\partial N},\\
(x_{1}, x_{2}^{2}\pm t), & p:\text{singular point of }F|_{\partial N}, 
\end{cases}
\end{equation*}
where $t=0$ and $t>0$ correspond to $\partial N$ and $\Int N$, respectively. 
\end{prop}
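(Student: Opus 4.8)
The plan is to use a local normal form analysis near a point $p\in\partial N$, distinguishing the two cases in the statement according to whether $p$ is a regular or singular point of $F|_{\partial N}$. First I would choose a collar $i\colon \partial N\times[0,\varepsilon)\hookrightarrow N$ and work in coordinates $(t,x_1,x_2)$ adapted to it, with $t=0$ defining $\partial N$ and $t>0$ pointing into $\operatorname{Int} N$. Since $F$ is a submersion, $dF_p$ has rank $2$ on $T_pN$; the key distinction is whether the restriction of $dF_p$ to $T_p\partial N$ already has rank $2$ (regular case) or rank $1$ (singular case, which by hypothesis is a fold point of $F|_{\partial N}$).

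In the regular case, $F|_{\partial N}$ is a local diffeomorphism near $p$, so after composing with a diffeomorphism $\Psi$ of $\mathbb{R}^2$ we may take $F|_{\partial N}(x_1,x_2)=(x_1,x_2)$ in suitable coordinates on $\partial N$; extending these along the collar gives coordinates $(t,x_1,x_2)$ on $N$ in which $F(0,x_1,x_2)=(x_1,x_2)$. Writing $F(t,x_1,x_2)=(x_1,x_2)+t\,G(t,x_1,x_2)$ for a smooth $\mathbb{R}^2$-valued $G$, the change of source coordinates $(t,x_1,x_2)\mapsto(t, F(t,x_1,x_2))$ is a diffeomorphism near $p$ (its differential at $0$ is visibly invertible because the $\partial/\partial t$ row is free and the $(x_1,x_2)$-block is the identity), and in the new coordinates $F$ is exactly $(t,x_1,x_2)\mapsto(x_1,x_2)$, which is the first normal form.

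The singular case is the main obstacle and requires more care. Here $\ker(dF_p)$ is $2$-dimensional; since $F$ is a submersion, $\partial/\partial t\notin\ker dF_p$, so $\ker dF_p\cap T_p\partial N$ is a line, which is exactly $\ker d(F|_{\partial N})_p$. Using the fold normal form for $F|_{\partial N}$, choose coordinates on $\partial N$ so that $F|_{\partial N}(x_1,x_2)=(x_1^2,x_2)$, and extend to a collar coordinate $(t,x_1,x_2)$. Then $F(t,x_1,x_2)=(x_1^2+t\,a(t,x_1,x_2),\ x_2+t\,b(t,x_1,x_2))$ for smooth functions $a,b$. The strategy is: first absorb the second component by the source coordinate change replacing $x_2$ with the second component of $F$ (legitimate since $x_2\mapsto x_2+tb$ has invertible differential in $x_2$), reducing to $F=(x_1^2+t\,\tilde a(t,x_1,x_2),\ x_2)$; then, since $F$ is a submersion, the function $c(t,x_1,x_2):=x_1^2+t\,\tilde a$ must have nonvanishing gradient at $0$, and because $\partial c/\partial x_1(0)=0$ and $\partial c/\partial x_2(0)=0$ (the latter after the reduction), we must have $\partial c/\partial t(0)=\tilde a(0)\neq 0$; write $\epsilon=\operatorname{sign}\tilde a(0)\in\{\pm1\}$. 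The remaining task is a parametrized version of the Morse lemma (with $x_2$ as a parameter): in the $(t,x_1)$-variables, for each fixed $x_2$, the function $c$ has a nondegenerate critical-type structure allowing one to write $c=\epsilon(x_1^2+ s)$ after a smooth change $(t,x_1)\mapsto(s,x_1)$ depending smoothly on $x_2$. Concretely, solve $c(t,x_1,x_2)=\epsilon(x_1^2+s)$ for $s=s(t,x_1,x_2)$; since $\partial c/\partial t(0)=\tilde a(0)=\epsilon|\tilde a(0)|\neq 0$, the map $(t,x_1,x_2)\mapsto(s,x_1,x_2)$ is a local diffeomorphism by the implicit function theorem, and in these final coordinates $F(s,x_1,x_2)=(\epsilon(x_1^2+s),x_2)$. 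Renaming $s$ as $t$ and absorbing the sign $\epsilon$ into $y_1$ (or equivalently composing with a reflection of $\mathbb{R}^2$) yields the normal form $(t,x_1,x_2)\mapsto(x_1^2\pm t,\ x_2)$.

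I would finish by remarking that the sign $\pm$ is a genuine invariant: it records, near the fold curve of $F|_{\partial N}$, whether the two-sheeted fiber locus of $F$ bends into $\operatorname{Int} N$ or out of it, and cannot be removed by collar-preserving coordinate changes. The only subtlety to check carefully is that all coordinate changes used can be taken to preserve the condition "$t=0$ is $\partial N$, $t>0$ is $\operatorname{Int} N$," i.e. one must not mix $t$ with the $x$-variables in a way that moves the boundary; this is automatic in the regular case and, in the singular case, is exactly why we reduce the second component first and only then perform the parametrized Morse lemma in the $(t,x_1)$-plane, keeping $\{t=0\}$ transverse to the new $s$-coordinate at the end because $\partial s/\partial t(0)\neq 0$ with a definite sign.
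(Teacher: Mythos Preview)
The paper does not supply a proof here; the proposition is simply attributed to Shibata. Your overall strategy---straightening the regular case directly, and in the singular case first invoking the fold normal form on $\partial N$, then absorbing the second target component, and finally rescaling the collar variable---is the standard route and is essentially sound. There is a minor slip: $\ker dF_p$ is one-dimensional (not two), since $F$ is a submersion from a $3$-manifold to the plane; your subsequent conclusion that $\ker dF_p=\ker d(F|_{\partial N})_p\subset T_p\partial N$ is nevertheless correct.

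The substantive error is the placement of the sign $\epsilon$ in the final step. You solve $c=\epsilon(x_1^2+s)$, giving $s=\epsilon c-x_1^2$; but on the boundary $t=0$ one has $c=x_1^2$, so $s=(\epsilon-1)x_1^2$, which equals $-2x_1^2$ when $\epsilon=-1$. Hence $\{t=0\}$ does \emph{not} become $\{s=0\}$, violating the requirement ``$t=0$ corresponds to $\partial N$'' in the statement; mere transversality $\partial s/\partial t(0)\neq 0$ is not enough. Your proposed remedy of ``absorbing $\epsilon$ into $y_1$'' then compounds the confusion: composing with a reflection of the target would collapse both cases into a single normal form, contradicting your own (correct) final remark that the sign is a genuine invariant. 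The fix is simple: solve instead $c=x_1^2+\epsilon s$, i.e.\ set $s=\epsilon(c-x_1^2)=\epsilon\, t\,\tilde a$. Then $\{t=0\}=\{s=0\}$, $\partial s/\partial t(0)=\epsilon\,\tilde a(0)=|\tilde a(0)|>0$ so that $t>0$ corresponds to $s>0$, and in the new coordinates $F=(x_1^2+\epsilon s,\,x_2)$ is already the desired normal form with no target reflection needed.
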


In \textrm{Figure\;1}, we draw the submersion $F$ around the singular point of $F|_{\partial N}$. 
We denote the subset of $S(F|_{\partial N})$ consisting of those points for which $F$ is of the form $(x_{1},x_{2}^{2}+t)$ (resp.\ $(x_{1},x_{2}^{2}-t)$) by $S_{\I}(F)$ (resp.\ $S_{\II}(F)$).

\begin{rem}
Let $M$ be a closed surface. 
\textrm{Proposition\;2.5} holds similarly for a submersion $g\colon M\times[0,1)\to\mathbb{R}^{2}$ which is a stable fold map of $M\times\{0\}$. 
The subset of $S(g|_{M\times\{0\}})$ consisting of the points where $g$ is of the form $(x_{1},x_{2}^{2}+t)$ (resp.\ $(x_{1},x_{2}^{2}-t)$) is denoted by $S_{\I}(g)$ (resp.\ $S_{\II}(g)$).
\end{rem}

\begin{rem}
Following Shibata\;\cite{Shi}, we refer the points contained in $S_{\I}(F)$ (resp.\ $S_{\II}(F)$) as type\;$\I$ (resp.\ type\;$\II$).
However, in other works such as Saeki--Yamamoto\;\cite{SY} and Yamamoto\;\cite{Y}, points of type\;$\I$ (resp.\ type\;$\II$) are referred to as \emph{boundary definite fold points} (resp.\ \emph{boundary indefinite fold points}). 
\end{rem}

In the rest of this subsection, we prepare some terms to define a pairing map which appears in \textrm{Theorem\;4.1}. 
Let $M$ be a closed oriented surface and $f\colon M\to\mathbb{R}^2$ be a horizontal stable fold map.
We will define the signed graph of $f$, which was originally introduced in \cite{HJF}.
For the definition of the graph, it is necessary to fix an orientation on $M$. 
Accordingly, we assume a fixed orientation on each closed orientable surface throughout this section.

\begin{defi}[\textbf{Signed graph}]
We define the vertices, edges, and labels for the \emph{signed graph} $G_{f}$ of $f$ as follows. 
The vertices of the graph correspond bijectively to the connected components of $M\setminus S(f)$.
The edges correspond bijectively to the components of $S(f)$, and each edge connects the two vertices associated with the adjacent components of $M\setminus S(f)$.
Each vertex is labeled with ``$+$" (resp.\ ``$-$") if the restriction of $f$ to the corresponding component of $M\setminus S(f)$ is orientation preserving (resp.\ reversing).
\end{defi}

\begin{rem}
In \cite{HJF}, they define the \emph{weighted graphs} for stable maps from surfaces to the plane. 
It is defined to post the genera of the regular components to the corresponding vertices. 
In this paper, we use ``signed graphs" since we do not need the weights. 
\end{rem}

We define some symbols related to the graph $G_f$.
Denote by $V_{f}$ the vertex set of $G_f$. 
The subset $V^{+}_{f}$ (resp.\ $V^{-}_{f}$) of $V_{f}$ consists of all vertices labeled with ``$+$" (resp.\ ``$-$").
Note that $V_{f}$ coincides with the disjoint union of $V_{f}^{+}$ and $V_{f}^{-}$.

\begin{rem}
As noted in \cite{HJF2}, $G_f$ is a bipartite graph with the parts $V_f^+$ and $V_f^-$.
Note that $G_f$ is not a simple graph in general; however, it has no loops because it is bipartite. 
\end{rem}

An element $(v,w)$ of $V_f^+\times V_f^-$ is identified with an edge of the graph with $V_f$ as the vertex set incident to the vertices $v \in V_f^+$ and $w \in V_f^-$.
Then, an element of $\mathcal{P}(V^{+}_{f}\times V^{-}_{f})$ is identified with a simple bipartite graphs with $V_f^+\sqcup V_f^-$ as the vertex set, where the symbol $\mathcal{P}$ represents the power set of the relevant set.
Note that $G_f$ is contained in $\mathcal{P}(V^{+}_{f}\times V^{-}_{f})$.

\begin{defi}[\textbf{Partial matching}]
A bipartite graph in which each vertex has degree $0$ or $1$ is called a \emph{partial matching}.
In particular, denote by $\mathcal{M}_f$ the subset of $\mathcal{P}(V^{+}_{f}\times V^{-}_{f})$ consisting of all partial matchings whose vertex set is $V_{f}^{+}\sqcup V_{f}^{-}$. 
\end{defi}

We define the map $\gamma_{f}\colon\mathcal{R}_{f}\to\mathcal{P}(V_f)$ as follows, where 
\begin{equation*}
\mathcal{R}_{f}=\{R~|~R\text{ is a component of }\mathbb{R}^2\setminus f(S(f))\}. 
\end{equation*}
For each $R\in\mathcal{R}_{f}$, the subset $\gamma_{f}(R)$ of $V_f$ consists of all elements corresponding to the connected components of $M\setminus S(f)$ that intersect with $f^{-1}(R)$. 
Furthermore, distinct two elements $R$ and $R'$ of $\mathcal{R}_{f}$ are said to be \emph{adjacent} if $\overline{R}\cap\overline{R'}$ is not empty and contains an arc.  
Then, we have the following lemma.

\begin{lem}\label{lemma1}
For any two adjacent and distinct elements $R$ and $R'$ of $\mathcal{R}_{f}$, either 
\begin{equation*}
\gamma_{f}(R)\subset\gamma_{f}(R')\text{ or }\gamma_{f}(R')\subset\gamma_{f}(R)
\end{equation*}
holds.
In particular, if $\gamma_{f}(R')\subset\gamma_{f}(R)$, then we have $\sharp(\gamma_{f}(R)\setminus\gamma_{f}(R'))=2$, where one element is contained in $V_f^+$ and the other is contained in $V_f^-$.
Here, $\sharp$ denotes the cardinality of the relevant set.
\end{lem}

\begin{proof}
By the definition of a fold map, the local behavior of $f$ around the singular points is determined.
Among the components of $M\setminus S(f)$, at most two are relevant to the change between $\gamma_f(R)$ and $\gamma_f(R')$, and this change is associated with the fold points corresponding to the adjacency of $R$ and $R'$. 
In particular, since $f$ is horizontal, the change is exactly two elements. 
Furthermore, by following the local behavior around the singular points of $f$, these two elements are contained in $V_f^+$ and $V_f^-$, respectively. 
\end{proof}

Under the above preparations, let us define a pairing map.
Let $g\colon M\times[0,1)\to\mathbb{R}^2$ be a submersion such that $f=g|_{M\times\{0\}}$ is a horizontal stable fold map.
In the following context, we regard $f(S(f))$ as a disjoint union of finitely many finite (multi-) graphs and circles, where the vertices correspond to the double points.

\begin{defi}[\textbf{Pairing map}]
A map $\delta\colon\mathcal{R}_{f}\to\mathcal{M}_f$ is called a \emph{pairing map} for $g$ if it satisfies the following conditions. 
\vspace{-1pt}
\begin{enumerate}
\setlength{\parskip}{0mm} 
\setlength{\itemsep}{0cm}

\item[(1)] 
For each $R\in\mathcal{R}_{f}$ and $(v,w)\in\delta(R)$, we have $v,w\in\gamma_f(R)$.
\vspace{1pt}

\item[(2)] 
For each $R\in\mathcal{R}_{f}$, $v\in\gamma_{f}(R)$ appear as an endpoint of exactly one edge in $\delta(R)$. 
\vspace{1pt}

\item[(3)] 
For each edge or circle component $e\subset f(S(f))$, let $R$ and $R'$ be components of $\mathbb{R}^2\setminus f(S(f))$ satisfying $\overline{R}\cap\overline{R'}\supset e$. 
Assume that we have $\gamma_{f}(R)\setminus\gamma_{f}(R')=\{v,w\}$, where $v\in V_f^+$ and $w\in V_f^-$.
Then, either of the following conditions holds. 
\vspace{1pt}

\begin{enumerate}

\item[(3--1)] 
Suppose that we have $f^{-1}(\Int e)\cap S_{\I}(g)\neq\emptyset$, where $\Int e$ denotes the open interval obtained by removing the endpoints of $e$ (resp.\ itself) if $e$ is an arc (resp.\ a circle).
Then, we have
\begin{align*}
&\delta(R)=\delta(R')\sqcup\{(v,w)\}.
\end{align*}

\item[(3--2)] 
Suppose that we have $f^{-1}(\Int e)\cap S_{\II}(g)\neq\emptyset$.
Then, there exists $(v',w')\in\delta(R')$ such that either of the following conditions holds. 
\begin{align*}
&\delta(R)=(\delta(R')\setminus\{(v,w)\})\sqcup\{(v',w),(v,w')\}, \text{or}\\
&\delta(R)=\delta(R')\sqcup\{(v,w)\},
\end{align*}
where, starting from any component of $\mathbb{R}^2\setminus f(M)$ and reaching a component of $\mathcal{R}_f$, the number of left-to-right changes of the second equation minus the number of right-to-left changes is non-negative. 
\end{enumerate}

\begin{figure}[t]
\centering
\includegraphics[width=50mm]{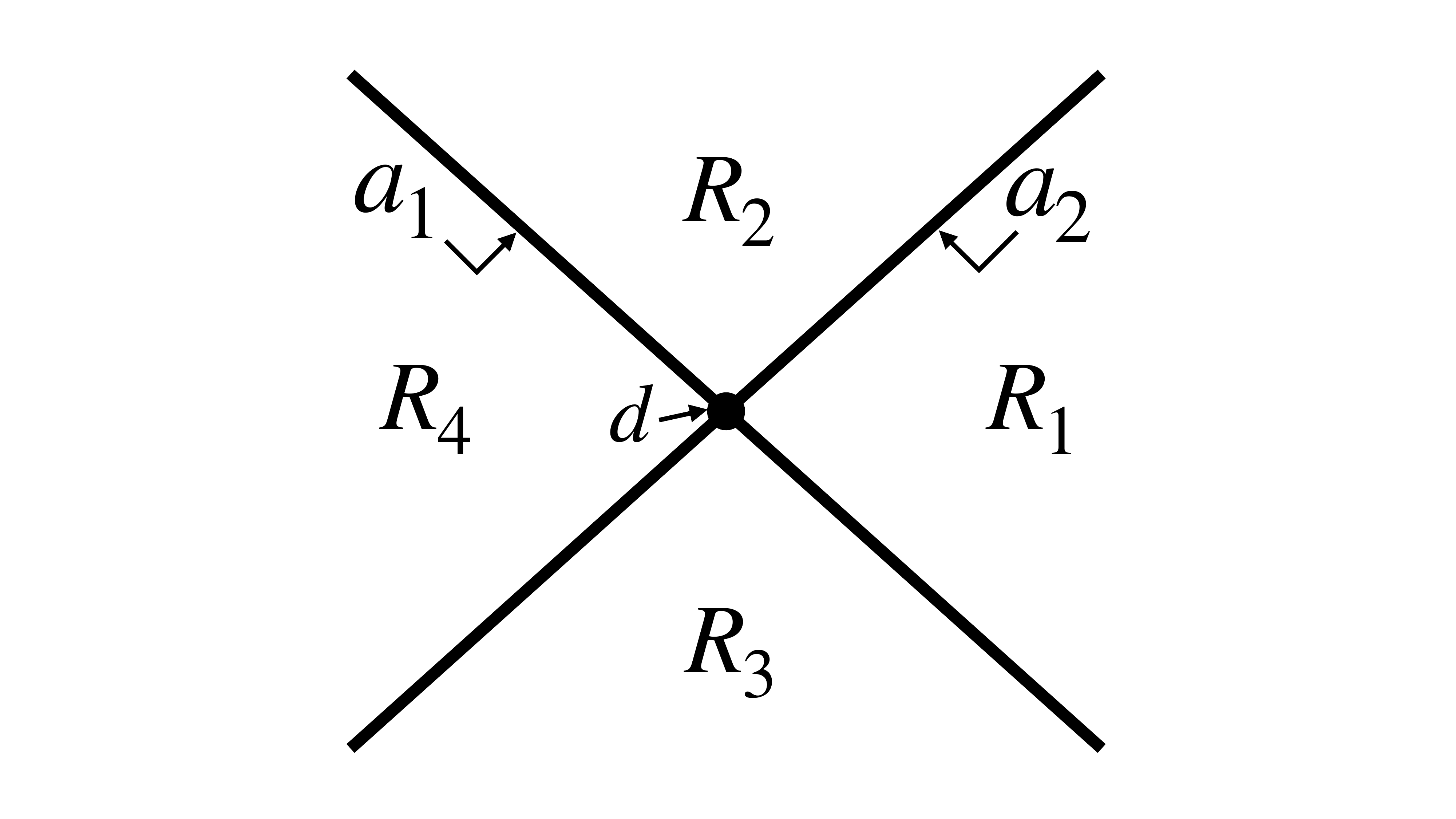}
\caption{
We depicts the regions $R_1$, $R_2$, $R_3$, and $R_4$ in $\mathcal{R}_f$ around the vertex $d$ of $f(S(f))$. 
The arcs $a_1$ and $a_2$ pass through $d$. 
}
\end{figure}

\item[(4)] 
For each vertex $d\in f(S(f))$, let $R_1,R_2,R_3$, and $R_4$ represent elements of $\mathcal{R}_f$ around $d$ as depicted in \textrm{Figure~2} such that
\begin{equation*}
\sharp\gamma_{f}(R_1)>\sharp\gamma_{f}(R_2)=\sharp\gamma_{f}(R_3)>\sharp\gamma_{f}(R_4).
\end{equation*}
The arcs $a_1$ and $a_2$ pass through $d$. 
Then, either of the following conditions holds, where the vertices $v_i\in V_f^+$ and $w_i\in V_f^-$ are all distinct for $i=1,2,3,4$. 

\vspace{1pt}
\begin{enumerate}
\item[(4--1)] Suppose that both $a_1$ and $a_2$ are contained in $f(S_{\I}(g))$. 
We have
\begin{align*}
&~~~~~~~~~~~~~\delta(R_1)=\delta(R_2)\sqcup\{(v_1,w_1)\}=\delta(R_3)\sqcup\{(v_2,w_2)\}=\delta(R_4)\sqcup\{(v_1,w_1),(v_2,w_2)\}.
\end{align*}

\item[(4--2)] Suppose that $a_1$ and $a_2$ are contained in $f(S_{\I}(g))$ and $f(S_{\II}(g))$, respectively.
Then, either of the conditions holds. 
\begin{enumerate}
\item[(4--2--1)]
Suppose that $a_2$ holds the first equation in (3--2). 
We have
\begin{align*}
&\delta(R_1)=\delta(R_3)\sqcup\{(v_1,w_1)\},\\
&\delta(R_2)=\delta(R_4)\sqcup\{(v_1,w_1)\},\\
&\delta(R_1)=(\delta(R_2)\setminus\{(v_2,w_2)\})\sqcup\{(v_2,w_3),(v_3,w_2)\},\\
&\delta(R_3)=(\delta(R_4)\setminus\{(v_2,w_2)\})\sqcup\{(v_2,w_3),(v_3,w_2)\}.
\end{align*}
\item[(4--2--2)]
Suppose that $a_2$ holds the second equation in (3--2).
We have
\begin{align*}
&\delta(R_1)=\delta(R_3)\sqcup\{(v_1,w_1)\},\\
&\delta(R_2)=\delta(R_4)\sqcup\{(v_1,w_1)\},\\
&\delta(R_1)=\delta(R_2)\sqcup\{(v_2,w_2)\},\\
&\delta(R_3)=\delta(R_4)\sqcup\{(v_2,w_2)\}.
\end{align*}
\end{enumerate}

For the case that $a_1$ and $a_2$ are contained in $f(S_{\II}(g))$ and $f(S_{\I}(g))$, respectively, we also consider similar condition.

\item[(4--3)] 
Suppose that both $a_1$ and $a_2$ are contained in $f(S_{\II}(g))$.
Then, either of the following conditions holds. 
\begin{enumerate}
\item[(4--3--1)]
Suppose that $a_1$ and $a_2$ hold the first condition of (3--2). 
We have 
\begin{align*}
& \delta(R_1)=(\delta(R_3)\setminus\{(v_1,w_1)\})\sqcup\{(v_1,w_2),(v_2,w_1)\},\\
& \delta(R_2)=(\delta(R_4)\setminus\{(v_1,w_1)\})\sqcup\{(v_1,w_2),(v_2,w_1)\},\\
& \delta(R_1)=(\delta(R_2)\setminus\{(v_3,w_3)\})\sqcup\{(v_3,w_4),(v_4,w_3)\},\\
& \delta(R_3)=(\delta(R_4)\setminus\{(v_3,w_3)\})\sqcup\{(v_3,w_4),(v_4,w_3)\},
\end{align*}
or 
\begin{align*}
& \delta(R_1)=(\delta(R_3)\setminus\{(v_1,w_2)\})\sqcup\{(v_1,w_3),(v_2,w_2)\},\\
& \delta(R_2)=(\delta(R_4)\setminus\{(v_3,w_2)\})\sqcup\{(v_3,w_3),(v_2,w_2)\},\\
& \delta(R_1)=(\delta(R_2)\setminus\{(v_3,w_3)\})\sqcup\{(v_3,w_1),(v_1,w_3)\},\\
& \delta(R_3)=(\delta(R_4)\setminus\{(v_3,w_2)\})\sqcup\{(v_3,w_1),(v_1,w_2)\}.
\end{align*}
\item[(4--3--2)] 
Suppose that $a_1$ and $a_2$ hold the first and second conditions in (3--2), respectively.
Then, we have
\begin{align*}
& \delta(R_1)=(\delta(R_3)\setminus\{(v_1,w_1)\})\sqcup\{(v_2,w_1),(v_1,w_2)\},\\
& \delta(R_2)=(\delta(R_4)\setminus\{(v_1,w_1)\})\sqcup\{(v_2,w_1),(v_1,w_2)\},\\
& \delta(R_1)=\delta(R_2)\sqcup\{(v_3,w_3)\},\\
& \delta(R_3)=\delta(R_4)\sqcup\{(v_3,w_3)\}.
\end{align*}
A similar equation holds when $a_1$ and $a_2$ satisfy the opposite conditions. 
\item[(4--3--3)]
Suppose that $a_1$ and $a_2$ hold the second condition in (3--2). 
We have
\begin{align*}
&\delta(R_1)=\delta(R_3)\sqcup\{(v_1,w_1)\},\\
&\delta(R_2)=\delta(R_4)\sqcup\{(v_1,w_1)\},\\
&\delta(R_1)=\delta(R_2)\sqcup\{(v_2,w_2)\},\\
&\delta(R_3)=\delta(R_4)\sqcup\{(v_2,w_2)\}.
\end{align*}
\end{enumerate}
\end{enumerate}
\end{enumerate}
\end{defi}

\section{Examples}
In this section, we present two examples, \textrm{Example\;3.1} and \textrm{Example\;3.2}, of submersions of the product of closed oriented surfaces and $[0,1)$ into $\mathbb{R}^2$ and their pairing maps. 
By the existence of these pairing maps, each of the submersions admits a non-singular as will follow from \textrm{Theorem\;4.1} stated later.
We also provide an example of a map that does not admit a non-singular extension in \textrm{Example\;3.3}.

\begin{example}
In \textrm{Figure\;3}, the left-hand side shows the submersion $g_1\colon S^2\times[0,1)\to\mathbb{R}^2$ that is defined the composition of an embedding of $S^2\times[0,1)$ into $\mathbb{R}^3$ and the standard projection from $\mathbb{R}^3$ to $\mathbb{R}^2$, while the right-hand side shows the signed graph $G_{f_1}$ of the restriction $f_1=g_1|_{M\times\{0\}}$ of $g_1$ to the boundary $M\times\{0\}$.
For the map $g_1$, a pairing map $\delta_1$ is introduced as shown in \textrm{Figure\;4}. 
In the figure, the map $\delta_1$ corresponds each element of $\mathcal{R}_{f_1}$ to the graphs in $\mathcal{M}_{f_1}$. 
\end{example}

\begin{example}
In \textrm{Figure\;5}, the left-hand side shows the submersion $g_2\colon (S^2\sqcup S^2)\times[0,1)\to\mathbb{R}^2$ that is also define as the composition of an embedding from $(S^2\sqcup S^2)\times[0,1)$ to $\mathbb{R}^3$ and the standard projection of $\mathbb{R}^3$ into $\mathbb{R}^2$, and the right-hand figure shows the signed graph $G_{f_2}$ corresponding to $f_2=g_2|_{(S^2\sqcup S^2)\times\{0\}}$.
The map $g_2$ has a pairing map $\delta_2$ shown in \textrm{Figure\;6}. 
In the figure, the map $\delta_2$ corresponds each element of $\mathcal{R}_{f_2}$ to the graphs in $\mathcal{M}_{f_2}$. 
\end{example}

\begin{example}
In \textrm{Figure\;7}, we depict a submersion $g_3\colon S^2\times[0,1)\to\mathbb{R}^2$.
This map is an example that does not admit a non-singular extension. 
\end{example}

\begin{figure}[t]
\centering
\includegraphics[width=80mm]{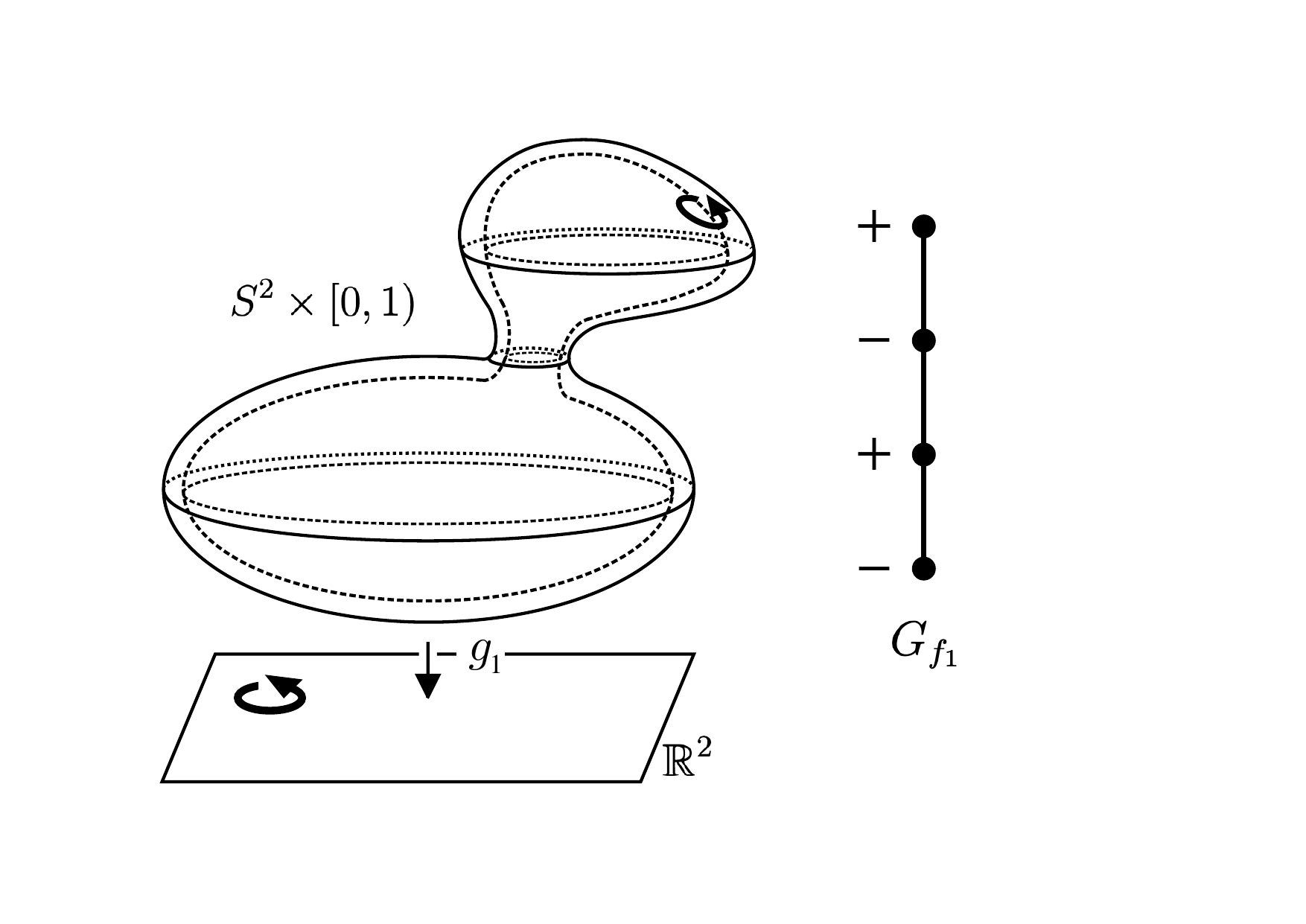}
\caption{
The submersion $g_1$ and the signed graph $G_{f_1}$. 
}
\end{figure}

\begin{figure}[t]
\centering
\includegraphics[width=80mm]{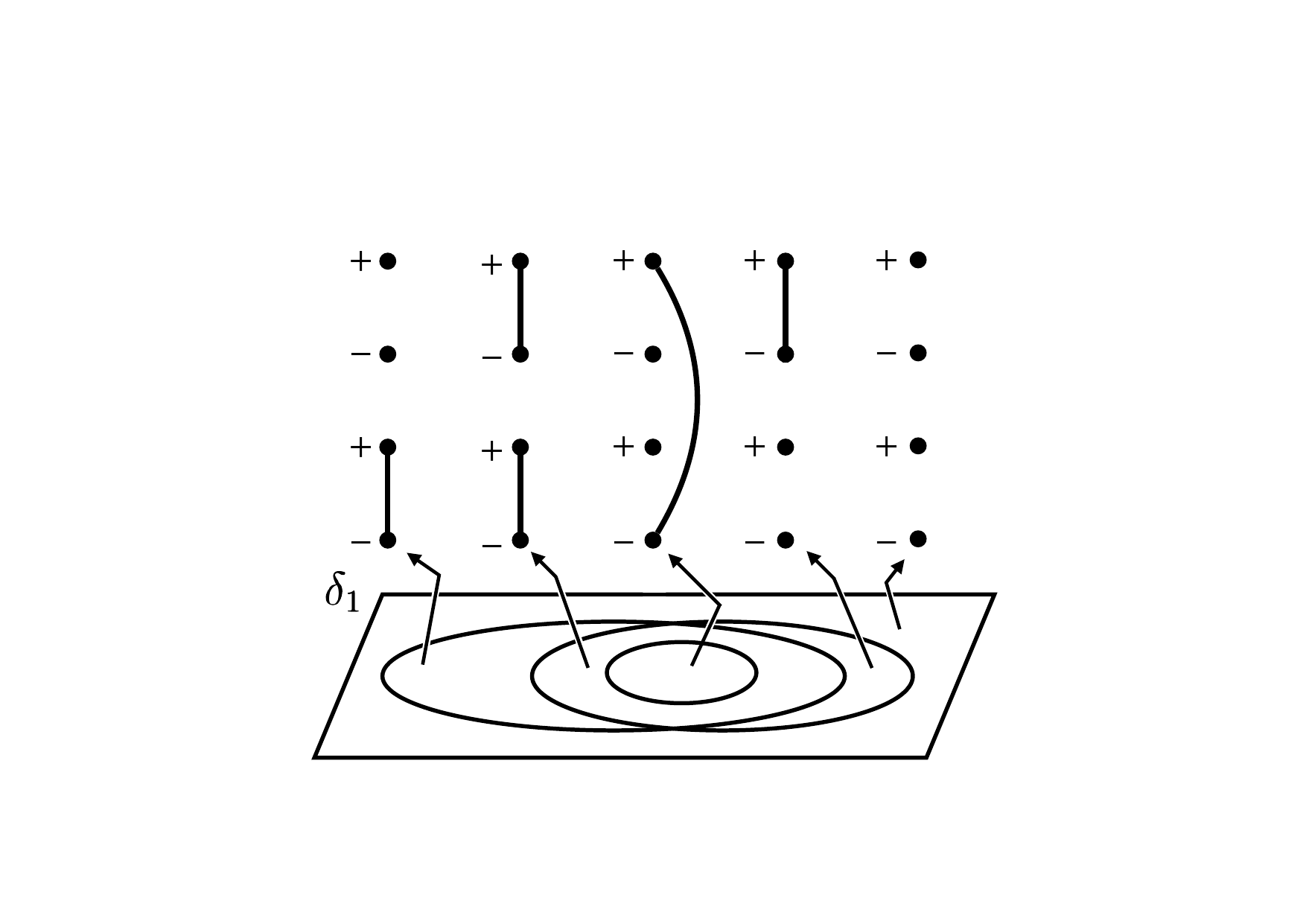}
\caption{
The pairing map $\delta_1$ from $\mathcal{R}_{f_1}$ to $\mathcal{M}_{f_1}$. 
}
\end{figure}

\begin{figure}[t]
\centering
\includegraphics[width=77mm]{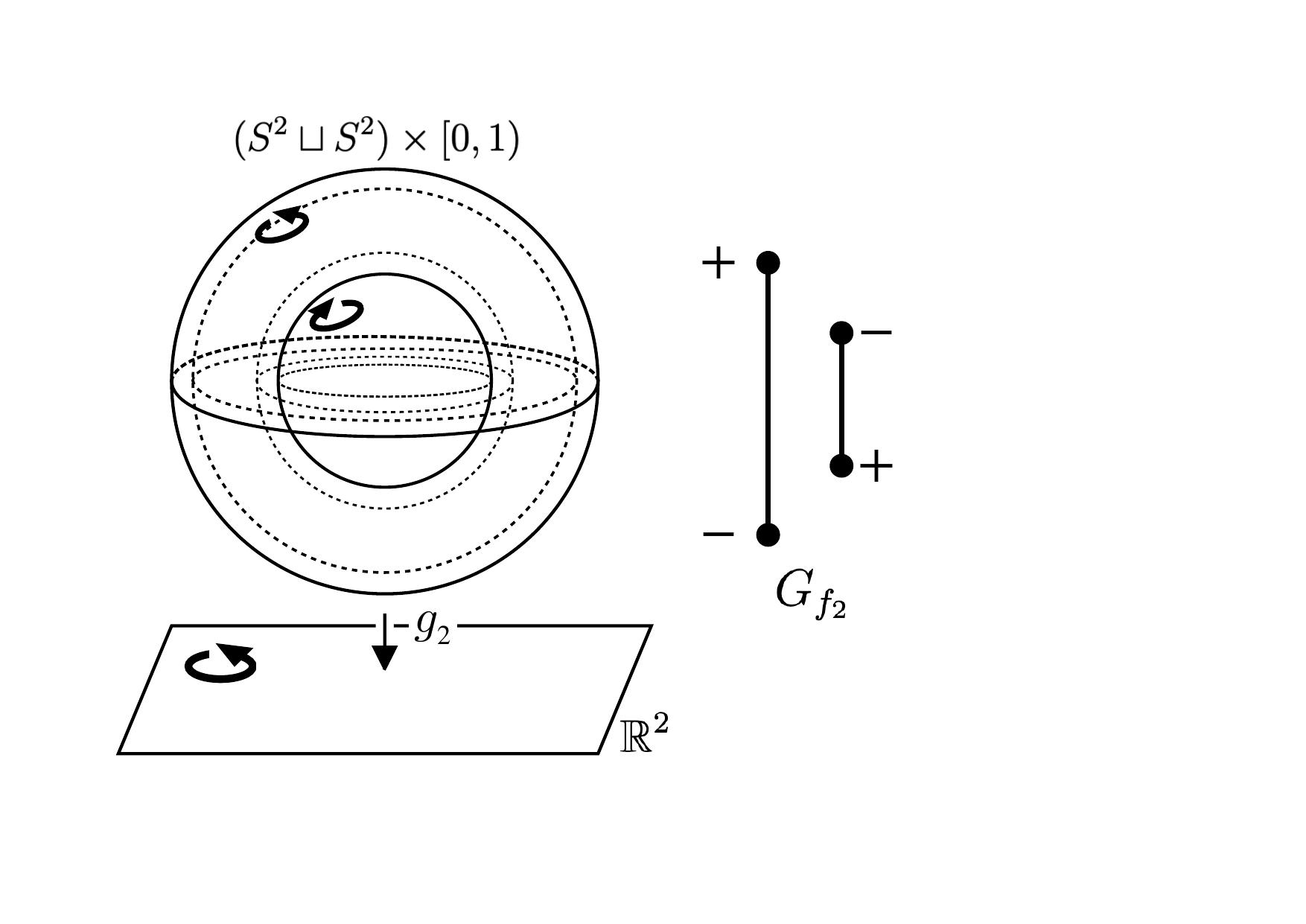}
\caption{
The submersion $g_2$ and the signed graph $G_{f_2}$. 
}
\end{figure}

\begin{figure}[t]
\centering
\includegraphics[width=80mm]{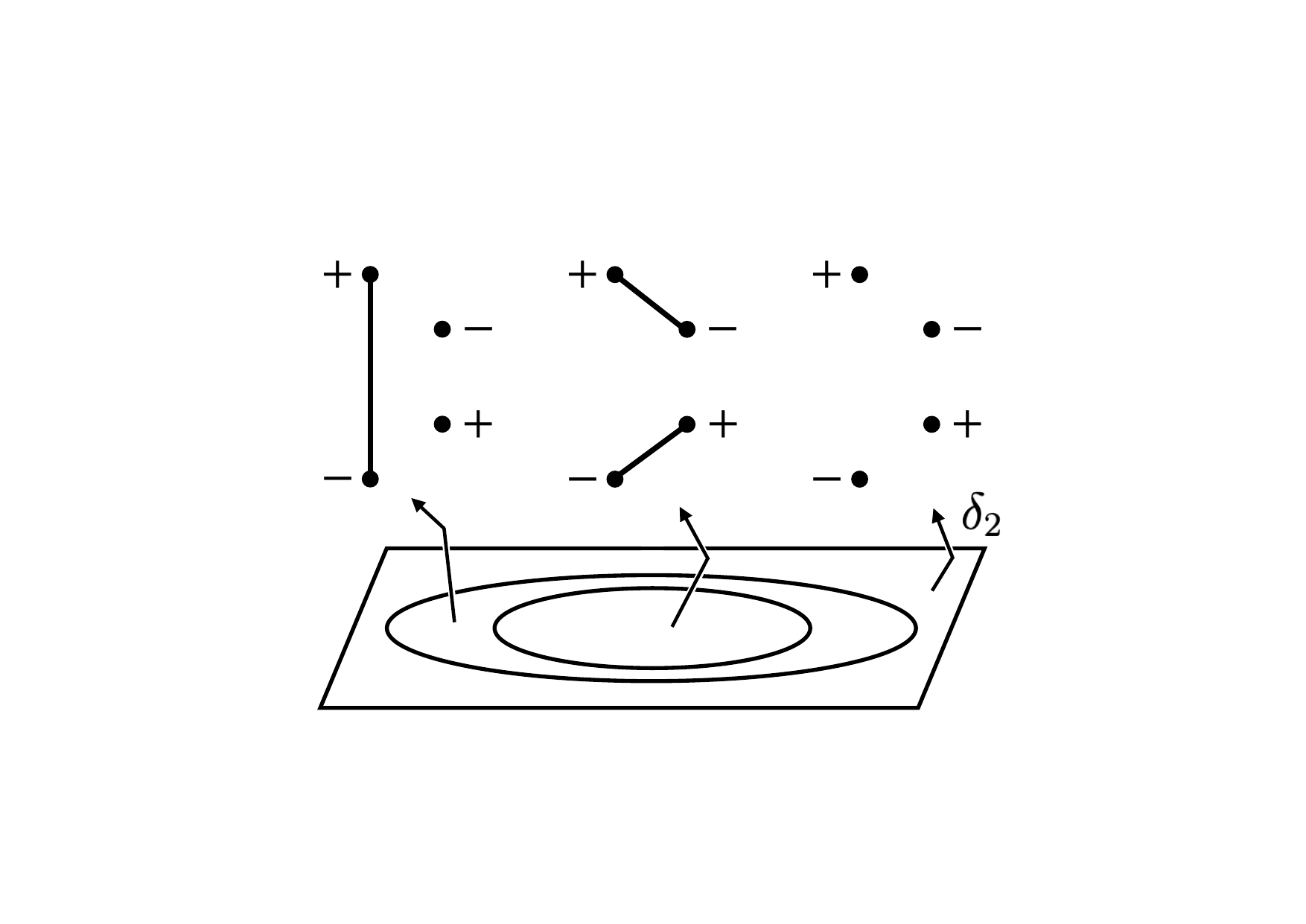}
\caption{
The pairing map $\delta_2$ from $\mathcal{R}_{f_2}$ to $\mathcal{M}_{f_2}$. 
}
\end{figure}

\begin{figure}[t]
\centering
\includegraphics[width=60mm]{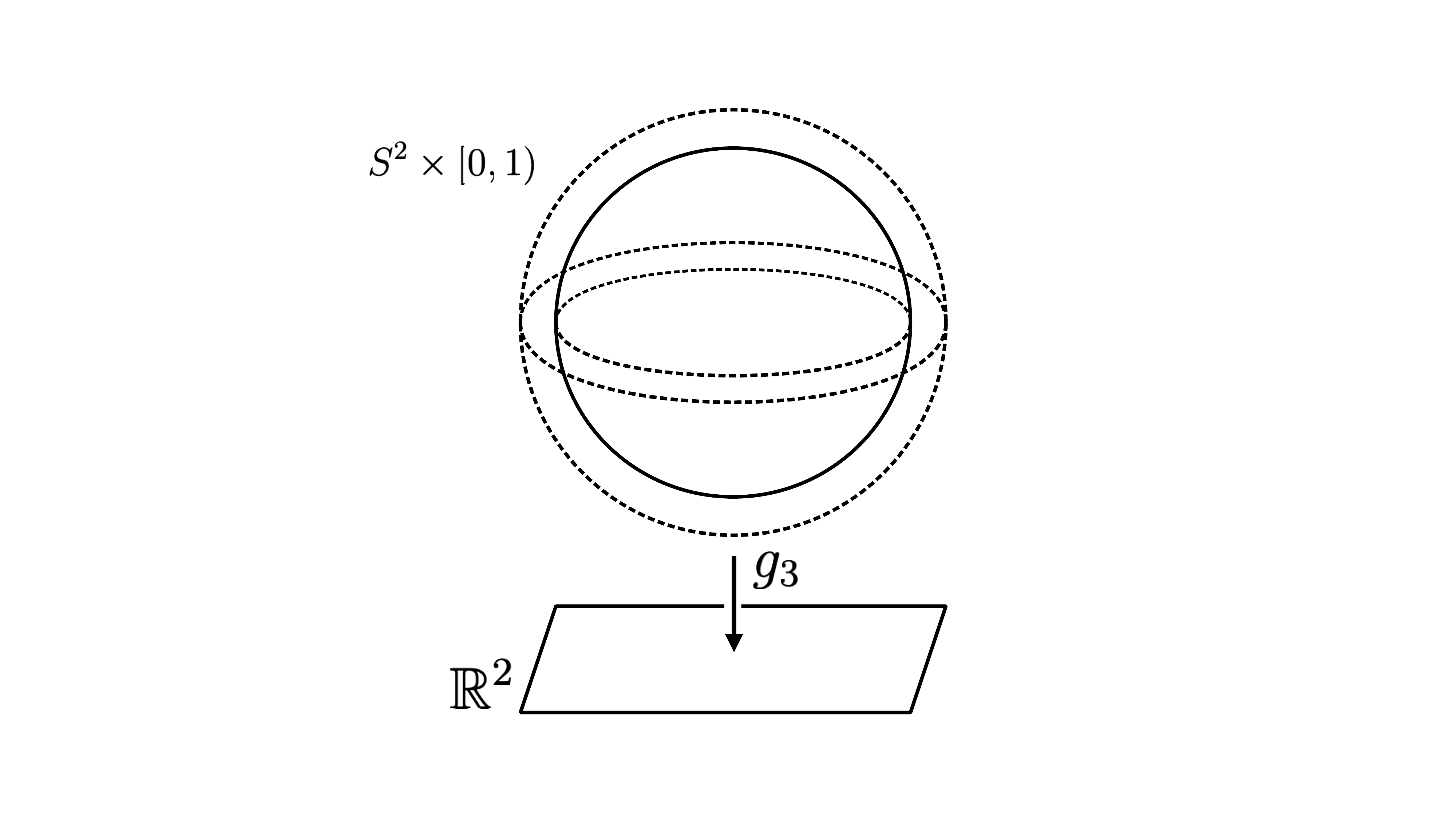}
\caption{
The submersion $g_3$. 
}
\end{figure}

\section{Main theorem and its proof}

In this section, we give our main theorem and its proof. 

\begin{thm}
Let $M$ be a closed oriented surface and let $g\colon M\times[0,1)\to\mathbb{R}^{2}$ be a submersion that is a horizontal stable fold map $f=g|_{M\times\{0\}}$ on the boundary $M\times\{0\}$. 
Then, the following statements are equivalent. 
\begin{enumerate}
\setlength{\leftskip}{0pt}
\item[(1)] There exist a compact oriented $3$-dimensional manifold $N$ such that $\partial N=M$, and a non-singular extension $F\colon N\to\mathbb{R}^{2}$ of $g$.
\vspace{3pt}
\item[(2)] There exists a pairing map $\delta\colon\mathcal{R}_f\to\mathcal{M}_f$ for $g$.
\vspace{1pt}
\end{enumerate}
\end{thm}

\proof
First, we define a map $\delta_F\colon\mathcal{R}_f\to\mathcal{M}_f$ for a non-singular extension $F\colon N\to\mathbb{R}^2$ of $g$.
Fix any region $R\in\mathcal{R}_f$ and choose a point $r\in R$. 
Since $r$ is a regular value of $f$, the preimage $F^{-1}(r)$ is a $1$-dimensional manifold, namely it is composed of arcs and circles. 
Each arc has two endpoints that lie in connected components of $M\setminus S(f)$. 
Besides, they correspond to two distinct vertices of $G_f$ with opposite labels because $N$ is oriented.  
Then, define $\delta_F(R)$ to be the set of all elements $(v,w)\in V_f^+\times V_f^-$ corresponding to an arc component of $F^{-1}(r)$. 
One checks immediately that this set does not depend on the choice of $r\in R$ because the two preimages of any two points in $R$ are homeomorphic. 
Moreover, regard $\delta_F(R)$ as a graph with vertex set $V_f=V_f^{+}\sqcup V_f^-$, we see that it is a partial matching, hence $\delta_F(R)\in\mathcal{M}_f$. 
This completes the definition of $\delta_F$ and verification of its well-definedness. 
\vspace{2pt}

We now verify that $\delta_F$ satisfies all the conditions of \textrm{Definition\;2.13} and therefore is a pairing map for $g$. 
First, let $R\in\mathcal{R}_f$ and $(v,w)\in\delta_F(R)$. 
By the definition of $\delta_F(R)$, the vertices $v$ and $w$ correspond to the endpoints of an arc component of $F^{-1}(r)$ for any $r\in R$. 
Since those endpoints lie in $M\setminus S(f)$, the definition of $\gamma_f(R)$ implies $v,w\in\gamma_f(R)$. 
Hence, $\delta_F$ satisfies condition (1). 
Let $v\in\gamma_f(R)$. 
By the definition of $\delta_F$, there is at least one edge in $\delta_F(R)$ incident to $v$. 
Moreover, Ehresmann's fibration theorem\;\cite{E} guarantees that exactly one such edge exists. 
Thus, condition (2) is satisfied. 
Let $R,R'\in\mathcal{R}_f$ be adjacent regions. 
Since $f$ is a horizontal stable fold map, \textrm{Lemma\;2.12} shows that $\gamma_f(R)$ and $\gamma_f(R')$ differ in exactly two vertices. 
Consequently, $\delta_F(R)$ and $\delta_F(R')$ differ by one edge joining those two vertices. 
Consider the types, $\I$ or $\II$, the singular value set of $f$ to which $R$ and $R'$ adjacent. 
\textrm{Proposition\;2.5} lists all possible local forms of $F$ around points on $\partial N$. 
According to this proposition, we immediately confirm that the difference between $\delta_F(R)$ and $\delta_F(R')$ corresponds to (3--1) or (3--2) of condition (3). 
The number defined in (3--2) is non-negative, since it is the number of $S^1$-components in the inverse image under $F$ of a point in the relevant element of $\mathcal{R}_f$. 
Hence, condition (3) is satisfied. 
A similar argument to condition (3) in a neighborhood of double points of $f|_{S(f)}$ shows that $\delta_F$ holds one of the sub-conditions in (4). 
This completes the verifications that $\delta_F$ is a pairing map for $g$. 
\vspace{2pt}

We now prove the converse of the theorem. 
Let $\delta\colon\mathcal{R}_f\to\mathcal{M}_f$ be a pairing map for $g$. 
To construct a non-singular extension of $g$, we first carry out some preparations for $\mathbb{R}^2$. 
We begin by decomposing $\mathbb{R}^2$ as follows. 
\begin{equation*}
\mathbb{R}^2=K\cup\bigsqcup_{R\in\mathcal{R}_f}L_{R}, 
\end{equation*}
where $K$ is a regular neighborhood of $f(S(f))$ in $\mathbb{R}^2$, and $L_R$ is the closure of the connected component of $\mathbb{R}^2\setminus K$ contained in $R$. 
We further decompose $K$ as
\begin{equation*}
K=\bigsqcup_{i=1}^{a}D_i \cup\bigsqcup_{i=1}^{b_1}T_i \cup\bigsqcup_{i=1}^{b_2}A_i,  
\end{equation*}
where each disk $D_i$ contains exactly one vertex of $f(S(f))$; each rectangle $T_i$ contains an arc that is a component of 
$
f(S(f))\cap\big(
\overline{\mathbb{R}^2\setminus\bigsqcup_{i=1}^{a}D_i}
\big);
$
and each annulus $A_i$ contains a circle component of $f(S(f))$.
Here, the integers $a$, $b_1$, and $b_2$ denote the numbers of vertices, edges, and circle components of $f(S(f))$, respectively, and they are finite because $M$ is compact. 
\vspace{2pt}

We next introduce a collection of sets and maps from them to the plane that will serve as the building block for constructing a non-singular extension of $g$. 
This construction follows the decomposition of $\mathbb{R}^2$ given above. 
\vspace{2pt}

Let $R\in\mathcal{R}_f$. 
Then, we define 
\begin{equation*}
N_R=L_R\times\Bigg(
\bigsqcup_{(v,w)\in\delta(R)}J_{(v,w)}
\sqcup 
\bigsqcup_{j=1}^{c}S_j^1
\Bigg), 
\end{equation*}
where each $J_{(v,w)}$ is an arc and each $S_j^1$ is a circle. 
The number $c$ is defined as the number of times the value of $\delta$ changes from the left-hand side to the right-hand side of the second equation in condition (3--2) as one moves from a component of $\mathbb{R}^2\setminus f(M)$ to $R$, minus the number of times it change from the right-hand side to the left-hand side. 
This is non-negative by the condition in (3--2). 
Furthermore, we define a map $F_R\colon N_R\to\mathbb{R}^2$ by $(x,y)\mapsto x$, where $x\in L_R$ and $y\in\bigsqcup_{(v,w)\in\delta(R)}J_{(v,w)}\sqcup \bigsqcup_{j=1}^{c}S_j^1$. 
\vspace{2pt}

\begin{figure}[t]
\centering
\includegraphics[width=140mm]{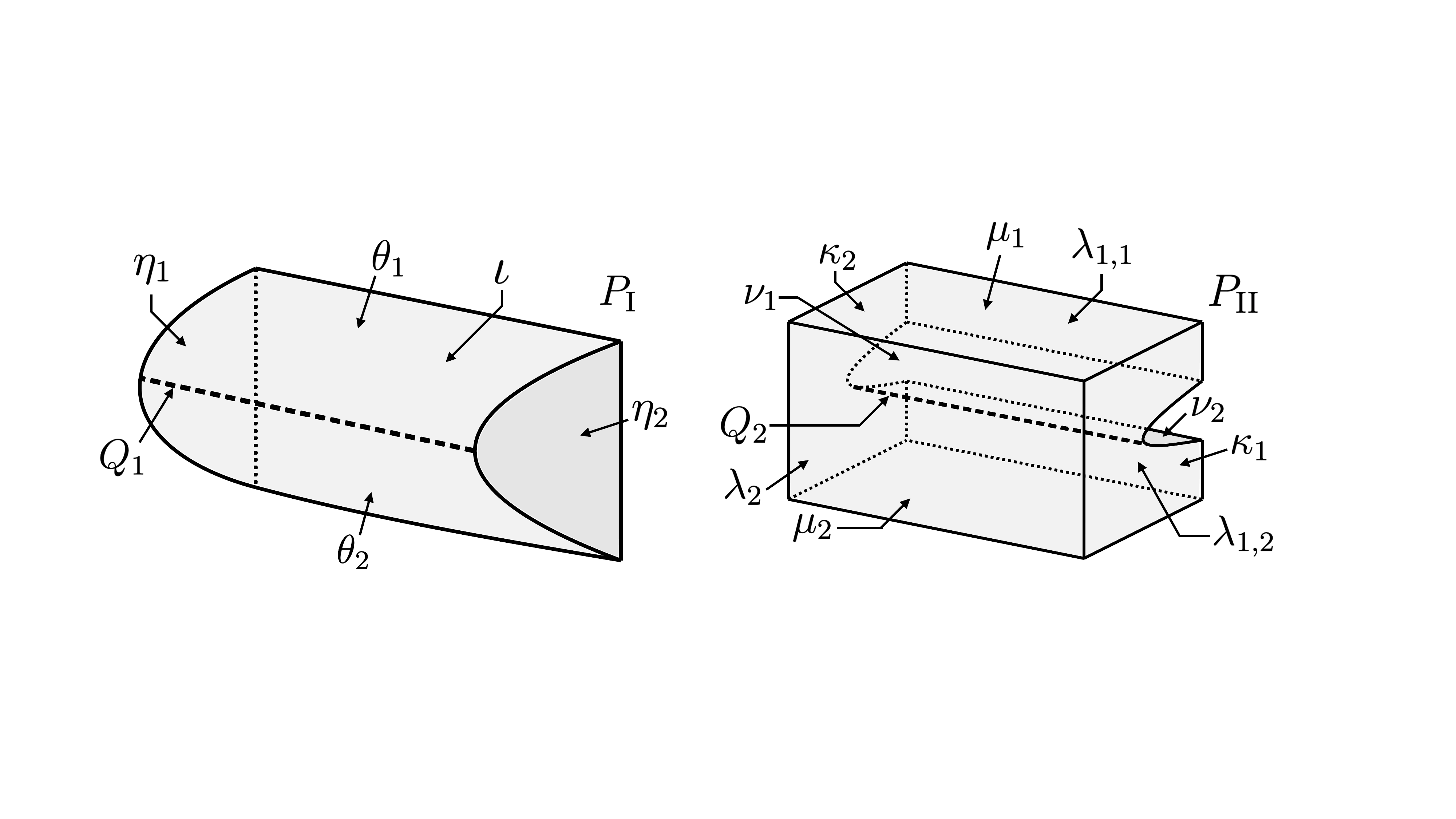}
\caption{
The part $P_{\I}$ and $P_{\II}$. 
The part $P_{\I}$ has five regions, denoted by $\eta_1$, $\eta_2$, $\theta_1$, $\theta_2$, and $\iota$, while $P_{\II}$ has nine regions, denoted by $\kappa_1$, $\kappa_2$, $\lambda_{1,1}$, $\lambda_{1,2}$, $\lambda_2$, $\mu_1$, $\mu_2$, $\nu_1$, and $\nu_2$.
Additionally, they have lines $Q_1$ and $Q_2$, respectively. 
}
\end{figure}

Let $e$ be the edge of $f(S(f))$ intersecting with the rectangle $T_i$. 
Assume that $\delta$ satisfies condition (3--1) of \textrm{Definition\;2.13}. 
In other words, for two regions $R,R'\in\mathcal{R}_f$ adjacent along $e$, we have
\begin{equation*}
\delta(R)=\delta(R')\sqcup\{(v,w)\},
\end{equation*}
In this case, we define 
\begin{equation*}
N_{T_i}=
\Bigg(
T_i\times
\Bigg(\bigsqcup_{(v',w')\in\delta(R')}J_{(v',w')}
\sqcup\bigsqcup_{j=1}^{c}S_j^1
\Bigg)
\Bigg)
\sqcup
P_{\I}, 
\end{equation*}
where each $J_{(v',w')}$ is an arc and each $S_j^1$ is a circle. 
Here, the part $P_{\I}$ is depicted at the left-hand side of \textrm{Figure\;8}. 
The integer $c\geq 0$ is defined for $R'$ as in the previous case. 
Next, we define a map $F_{T_i}\colon N_{T_i}\to\mathbb{R}^2$ as follows. 
On the product part $T_i\times\big(\bigsqcup_{(v',w')\in\delta(R')}J_{(v',w')}\sqcup\bigsqcup_{j=1}^{c}S_j^1\big)$, we define $F_{T_i}$ as $(x,y)\mapsto x$, where $x\in T_i$ and $y\in\bigsqcup_{(v',w')\in\delta(R')}J_{(v',w')}\sqcup\bigsqcup_{j=1}^{c}S_j^1$. 
On the part $P_{\I}$, we define $F_{T_i}$ as depicted in \textrm{Figure\;9}. 
The image of $P_{\I}$ by $F_{T_i}$ is $R\cap T_i$. 
Moreover, the image of the line $Q_1$ is $e\cap T_i$, while the regions $\eta_1$, $\eta_2$, and $\iota$ are each mapped to distinct edges of the rectangle $R\cap T_i$. 
Thus, we obtain a map $F_i$ from $N_{T_i}$ to $\mathbb{R}^2$. 
\vspace{2pt}

\begin{figure}[t]
\centering
\includegraphics[width=115mm]{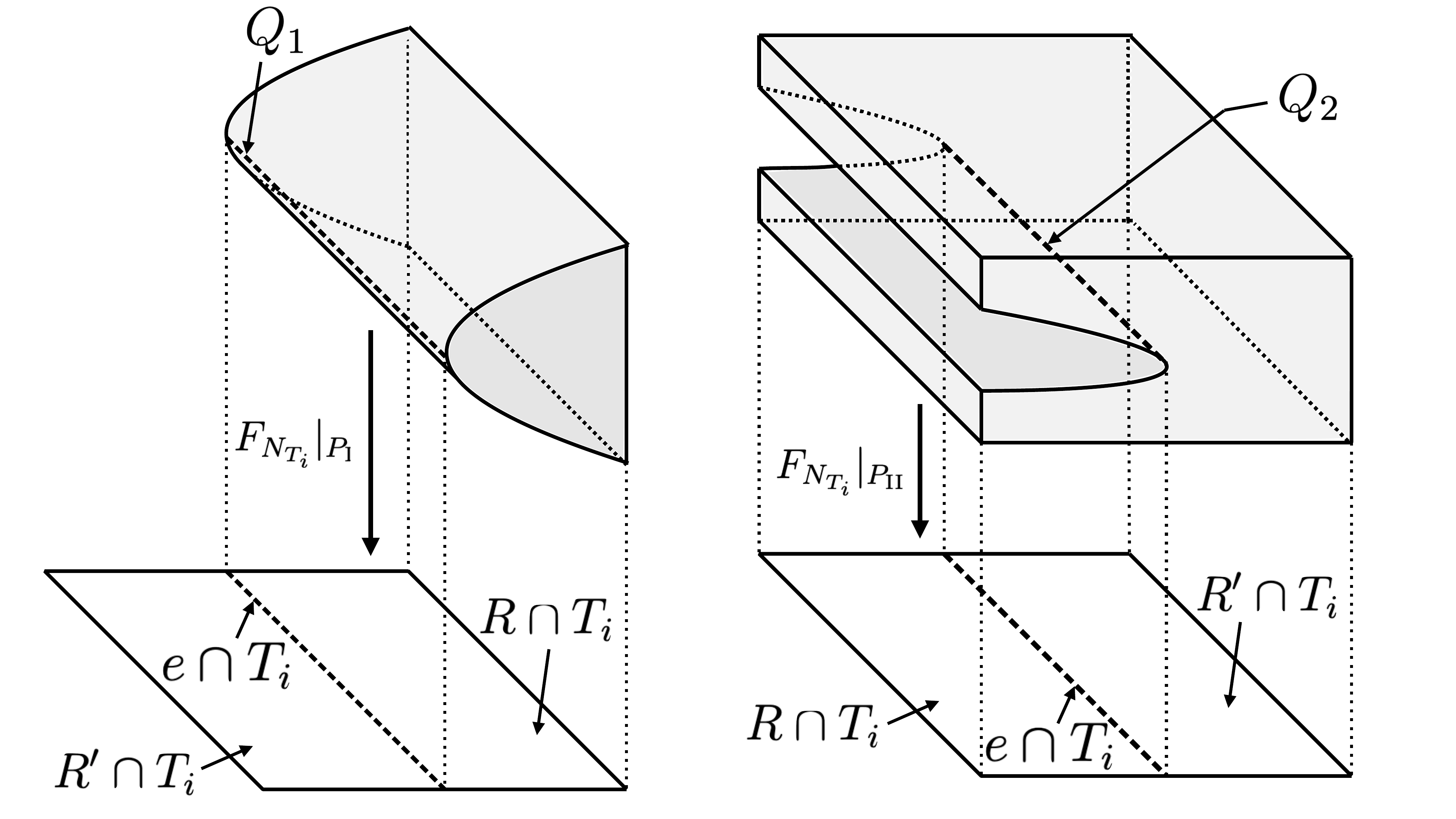}
\caption{
The map $F_{N_{T_i}}|_{P_{\I}}$ and $F_{N_{T_i}}|_{P_{\II}}$. 
}
\end{figure}

Assume that $\delta$ satisfies the first equation in (3--2) of \textrm{Definition\;2.13} along the edge $e$ of $f(S(f))$. 
That is, for two regions $R,R'\in\mathcal{R}_f$ adjacent along $e$, we have 
\begin{equation*}
\delta(R)=(\delta(R')\setminus\{(v,w)\})\sqcup\{(v',w),(v,w')\}.
\end{equation*}
In this case, we define
\begin{equation*}
N_{T_i}=
\Bigg(
T_i\times
\Bigg(
\bigsqcup_{(v'',w'')\in\delta(R')\setminus \{(v,w)\}}J_{(v'',w'')}
\sqcup
\bigsqcup_{j=1}^{c}S_j^1
\Bigg)
\Bigg)
\sqcup
P_{\II},
\end{equation*}
where each $J_{(v'',w'')}$ and each $S_j^1$ are defined as before. 
The integer $c\geq 0$ is defined for the region $R$ in the same way as in the previous cases. 
The part $P_{\II}$ is depicted on the right-hand side in \textrm{Figure\;8}. 
We now define a map $F_{T_i}\colon N_{T_i}\to\mathbb{R}^2$. 
On the product part, it is defined by $(x,y)\to x$, where $x\in T_i$ and $y\in\bigsqcup_{(v'',w'')\in\delta(R')\setminus \{(v,w)\}}J_{(v'',w'')}\sqcup\bigsqcup_{j=1}^{c}S_j^1$. 
On the part $P_{\II}$, it is defined as depicted in \textrm{Figure\;9}. 
The image of $P_{\II}$ by $F_{T_i}$ is $T_i$, and it is mapped compatibly with the equation in (3--2) for $\delta$. 
Here, the line $Q_2$ is mapped to $e\cap T_i$. 
Moreover, the regions $\lambda_{1,1}$, $\lambda_{1,2}$, $\lambda_2$, $\kappa_1$, and $\kappa_2$ are mapped to the edges of the rectangle $T_i$, with $\lambda_{1,1}$ and $\lambda_{1,2}$ mapped to the same edge, and $\lambda_{1,1}$, $\lambda_2$, $\kappa_1$, and $\kappa_2$ each mapped to distinct edges. 
\vspace{2pt}

Assume that $\delta$ satisfies the second equation in (3--2) of \textrm{Definition\;2.13} along the edge $e$ of $f(S(f))$.
That is, for two regions $R,R'\in\mathcal{R}_f$ adjacent along $e$, we have 
\begin{equation*}
\delta(R)=\delta(R')\sqcup\{(v,w)\}.
\end{equation*} 
In this case, we define
\begin{equation*}
N_{T_i}=
\Bigg(
T_i\times
\Bigg(
\bigsqcup_{(v',w')\in\delta(R')}J_{(v',w')}
\sqcup
\bigsqcup_{j=1}^{c}S_j^1
\Bigg)
\Bigg)
\sqcup
\tilde{P}_{\II},
\end{equation*}
where each $J_{(v',w')}$ and each $S_j^1$ are defined as in the previous cases, and the integer $c\geq 0$ is defined for the region $R'$ similarly to the earlier cases. 
Here, the part $\tilde{P}_{\II}$ is obtained by identifying the regions $\mu_1$ and $\mu_2$ in the part $P_{\II}$. 
Moreover, the map $F_{T_i}\colon N_{T_i}\to\mathbb{R}^2$ is defined as in the case of the first equation in (3--2). 
\vspace{2pt}

We next define a set $N_{A_i}$ and a map $F_{A_i}\colon N_{A_i}\to\mathbb{R}^2$ for $A_i$ containing a circle component of $f(S(f))$. 
They are similarly defined as the case for $T_i$. 
In the construction, we use the parts $P'_{\I}$, $P'_{\II}$, and $\tilde{P}'_{\II}$ instead of the parts $P_{\I}$, $P_{\II}$, and $P'_{\II}$, respectively.
These parts are obtained by attaching $\eta_1$ and $\eta_2$ of $P_{\I}$, $\kappa_1$ and $\kappa_2$ of $P_{\II}$, and $\kappa_1$ and $\kappa_2$ of $\tilde{P}_{\II}$, respectively. 
\par
\vspace{2pt}

Finally, let us consider the square $D_i$ containing the vertex $d$ of $f(S(f))$.
We will define a set $N_{D_i}$ and a map $F_{D_i}\colon N_{D_i}\to\mathbb{R}^2$. 
Here, we distinguish cases according to how the two arcs $a_1$ and $a_2$ passing through $d$ are corresponded to either $\I$ or $\II$. 
\vspace{2pt}

We begin with the case in which $a_1,a_2\subset f(S_{\I}(f))$. 
Then, $\delta$ satisfies condition (4--1) of \textrm{Definition\;2.13}. 
That is, there exists four regions $R_1,R_2,R_3,R_4\in\mathcal{R}_f$ around $d$ (see \textrm{Figure\;2}), and $\delta$ holds the following equation: 
\begin{equation*}
\delta(R_1)=\delta(R_2)\sqcup\{(v_1,w_1)\}=\delta(R_3)\sqcup\{(v_2,w_2)\}=\delta(R_4)\sqcup\{(v_1,w_1),(v_2,w_2)\}. 
\end{equation*}
In this case, we define  
\begin{equation*}
N_{D_i}=
\Bigg(
D_i\times
\Bigg(
\bigsqcup_{(v,w)\in\delta(R_4)}J_{(v,w)}
\sqcup
\bigsqcup_{j=1}^{c}S_j^1
\Bigg)
\Bigg)
\sqcup
P_{\I}
\sqcup
P_{\I},
\end{equation*}
where each $J_{(v,w)}$, each $S_j^1$, the integer $c\geq 0$, and the part $P_{\I}$ are defined as before cases.
Here, $c$ is defined for $R_4$. 
We next define a map $F_{D_i}\colon N_{D_i}\to\mathbb{R}^2$ as follows.  
We define $F_{D_i}$ by $(x,y)\mapsto x$ on the product part, where $x\in D_i$ and $y\in(\sqcup_{(v,w)\in\delta(R_4)}J_{(v,w)})
\sqcup
(\sqcup_{j=1}^{c}S_j^1)$.
On each $P_{\I}$, it is defined as shown in \textrm{Figure\;9}.
For one $P_{\I}$, it is mapped to $(R_1\cup R_2)\cap D_i$ so that $Q_1$ is sent to $a_1$, and $\eta_1$, $\eta_2$, and $\iota$ are mapped to distinct edges of the rectangle $(R_1\cup R_2)\cap D_i$.
For the other $P_{\I}$, it is mapped to $(R_1\cup R_3)\cap D_i$ so that $Q_1$ is sent to $a_2$, and $\eta_1$, $\eta_2$, and $\iota$ are mapped to distinct edges of the rectangle $(R_1\cup R_3)\cap D_i$. 
Note that $R_1$, $R_2$, $R_3$, and $R_4$ are considered to be squares in our arguments.
\vspace{2pt}

The definitions of $N_{D_i}$ and $F_{D_i}$ for (4--1) are complete by combining the definition of $N_{D_i}$ and $F_{T_i}$ in (3--1). 
Similarly, $N_{D_i}$ and $F_{D_i}$ for the first equation of (4--3--1), (4--3--2), and (4--3--3) are defined by combining the definition of $N_{D_i}$ and $F_{T_i}$ in (3).
\vspace{2pt}

\begin{figure}[t]
    \centering
    \includegraphics[width=80mm]{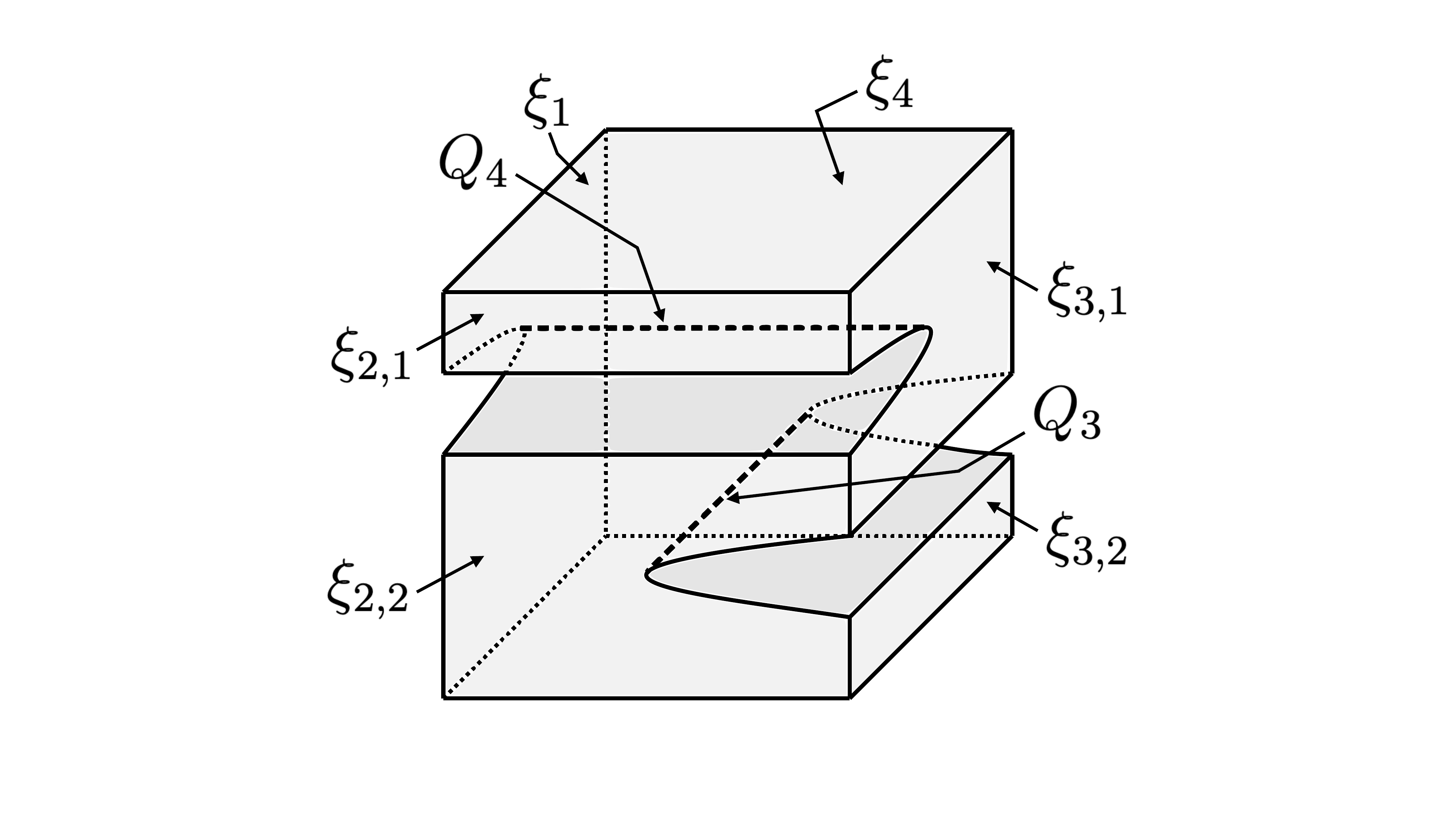}
    \caption{
The part $P_{\rm{D}}$.
It has the six regions denoted by $\xi_1$, $\xi_{2,1}$, $\xi_{2,2}$, $\xi_{3,1}$, $\xi_{3,2}$, and $\xi_4$, and the lines represented by $Q_3$ and $Q_4$.  
    }
\end{figure}

\begin{figure}[t]
    \centering
    \includegraphics[width=60mm]{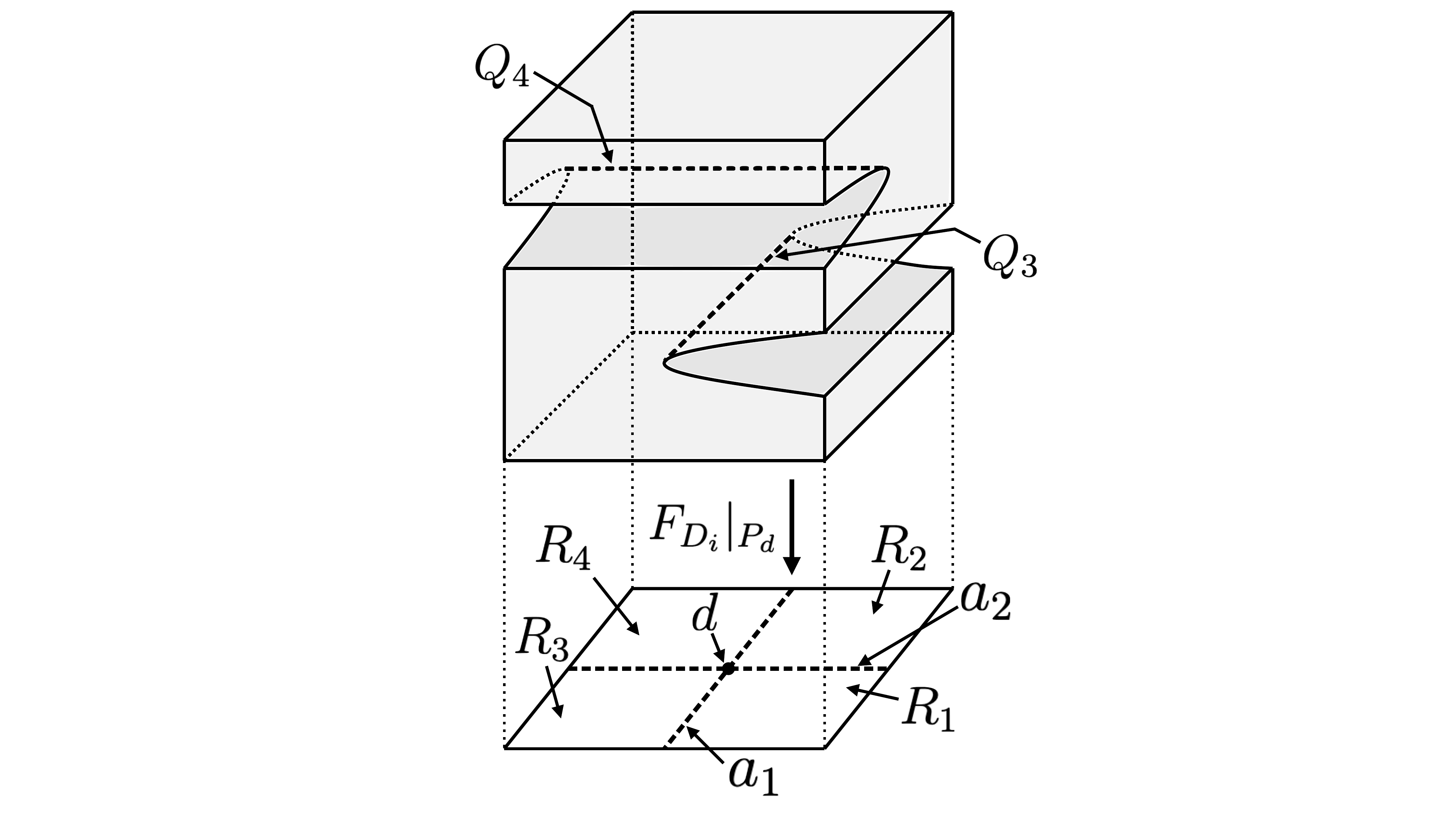}
    \caption{
The map $F_{D_i}|_{P_{\rm{D}}}$. 
    }
\end{figure}

On the other hand, when $\delta$ holds the second equation of (4--3--1) around $d$, we defined $N_{D_i}$ and $F_{D_i}$ by using the part $P_{D_i}$ as depicted in \textrm{Figure\;10}. 
That is, there exist the four regions $R_1,R_2,R_3,R_4\in\mathcal{R}_f$ around $d$ such that $\delta$ satisfies the equation 
\begin{align*}
& \delta(R_1)=(\delta(R_3)\setminus\{(v_1,w_2)\})\sqcup\{(v_1,w_3),(v_2,w_2)\},\\
& \delta(R_2)=(\delta(R_4)\setminus\{(v_3,w_2)\})\sqcup\{(v_3,w_3),(v_2,w_2)\},\\
& \delta(R_1)=(\delta(R_2)\setminus\{(v_3,w_3)\})\sqcup\{(v_3,w_1),(v_1,w_3)\},\\
& \delta(R_3)=(\delta(R_4)\setminus\{(v_3,w_2)\})\sqcup\{(v_3,w_1),(v_1,w_2)\}.
\end{align*}
In this case, we define
\begin{equation*}
N_{D_i}=
\Bigg(
D_i\times
\Bigg(
\bigsqcup_{(v,w)\in\delta(R_4)\setminus\{(v_2,w_4)\}}J_{(v,w)}
\sqcup
\bigsqcup_{j=1}^{c}S_j^1
\Bigg)
\Bigg)
\sqcup
P_{\rm{D}}, 
\end{equation*}
where each $J_{(v,w)}$, each $S_j^1$, and the integer $c\geq 0$ are defined as in the previous cases, with $c$ defined for the region $R_4$ in the same manner. 
The part $P_{D}$ is depicted as in \textrm{Figure\;10}. 
We now define the map $F_{D_i}\colon N_{D_i}\to\mathbb{R}^2$ as follows. 
On the product part, $F_{D_i}$ is defined by $(x,y)\mapsto x$, where $x\in D_i$ and $y\in\bigsqcup_{(v,w)\in\delta(R_4)\setminus\{(v_2,w_4)\}}J_{(v,w)}\sqcup\bigsqcup_{j=1}^{c}S_j^1$. 
On $P_{\rm{D}}$, it is defined as \textrm{Figure\;11} so that $P_{\rm{D}}$ is mapped to $D_i$.
Here, $Q_3$ and $Q_4$ are mapped to $a_1$ and $a_2$, respectively, and $\xi_1$, $\xi_{2,1}$, $\xi_{2,2}$, $\xi_{3,1}$, $\xi_{3,2}$, and $\xi_{4}$ are mapped to the edges of the square $D_i$. 
Note that $\xi_{2,1}$ and $\xi_{2,2}$, as well as $\xi_{3,1}$ and $\xi_{3,2}$, are mapped to the same edge of $D_i$, whereas $\xi_1$, $\xi_{2,1}$, $\xi_{3,1}$, and $\xi_4$ are each mapped to distinct edges.
\vspace{2pt}

We now assemble a $3$-dimensional manifold $N$ by gluing together the building blocks $N_R$, $N_{D_i}$, $N_{T_i}$, and $N_{A_i}$ defined above. 
To begin, set
\begin{equation*}
\tilde{N}=
\Bigg(
\bigsqcup_{R\in\mathcal{R}_f} N_{R}
\Bigg)
\sqcup 
\Bigg(
\bigsqcup_{i=1}^{a} N_{D_i}
\Bigg)
\sqcup 
\Bigg(
\bigsqcup_{i=1}^{b_1} N_{T_i}
\Bigg)
\sqcup 
\Bigg(
\bigsqcup_{i=1}^{b_2} N_{A_i}
\Bigg).
\end{equation*}
We define $N$ by identifying the points of these pieces according to the intersections $L_R\cap T_i$, $L_R\cap A_i$, $L_R\cap D_i$, and $T_i\cap D_i$. 
\vspace{2pt}

Since the identification for each case is performed similarly, we will describe the method for the intersection $L_R\cap T_i\neq\emptyset$. 
Recall that $N_R$ is the product of $L_R$ and intervals or circles, and $N_{T_i}$ is a disjoint union of the product of $T_i$ and intervals or circles, and $P_{\I}$ or $P_{\II}$. 
Here, the products of $L_R$ or $T_i$ and the boundaries of intervals correspond to the vertices in $V_g$. 
Moreover, the regions $\theta_1$ and $\theta_2$ in $P_\I$ shown in \textrm{Figure\;8} correspond to the vertices $v$ and $w$ of (3--1) in \textrm{Definition\;2.13}, respectively. 
The regions $\mu_1$, $\mu_2$, $\nu_1$, and $\nu_2$ in $P_\II$ correspond to the vertices $v$, $w$, $w'$ and $v'$ in the first equation of (3--2) in \textrm{Definition\;2.13}, respectively. 
Furthermore, the regions $\nu_1$ and $\nu_2$ of $\tilde{P}_{\II}$ correspond to $v$ and $w$ in the second equation of (3--2) in \textrm{Definition\;2.13}, respectively. 
By these correspondences, let us carry out the identifications. 
First, we consider the product parts of $N_R$ and $N_{T_i}$. 
In this case, when the boundaries of the intervals correspond to the same vertex, we identify the product parts on $\partial N_{R}$ and $\partial N_{T_i}$. 
We next consider the product parts of $N_R$ or $\partial N_{T_i}$ and the parts $P_{\I}$, $P_{\II}$, and $\tilde{P}_{\II}$. 
In this case, when the corresponding vertices are the same, we identify the product parts on $\partial N_{R}$ and $\partial N_{T_i}$ and $\iota$, $\lambda_{1,1}$, $\lambda_{1,2}$, and $\lambda_{2}$.  
Throughout this process, the identification is defined in the case where $L_R\cap T_i\neq\emptyset$. 
The identifications for the other cases are defined similarly, and the $3$-dimensional manifold $N$ is obtained from $\tilde{N}$. 
\vspace{2pt}

Recall that we defined maps $F_{N_{R}}$, $F_{T_{i}}$, $F_{A_{i}}$, and $F_{D_{i}}$ from each pieces to $\mathbb{R}^2$. 
From the definition of $N$ above, a map $F\colon N\to\mathbb{R}^2$ is naturally defined from these maps. 
\vspace{2pt}

Finally, let us verify that $F$ is a non-singular extension of $g$. 
Here, the singular point set of $F|_{\partial N}$ consists of $Q_1$, $Q_2$, $Q_3$, and $Q_4$ taken over all the relevant parts. 
Moreover, the image of this singular point set coincides with $f(S(f))$, and the orientations of them, defined as the manner in \cite{Lev}, also agree. 
Thus, by applying Levine's theorem\;\cite{Lev} to each component of $\partial N$ and $M$, we conclude that $\partial N$ is homeomorphic to $M$. 
\vspace{2pt}

We verify that $F$ coincides with $g$ on the collar neighborhood of the boundary $\partial N$. 
Let us define the embedding $i\colon M\to N$ as follows.
For $x\in M\setminus S(f)$, we consider the component of $M\setminus S(f)$ containing $x$, and define the point $y\in \partial N$ such that $F(y)=f(x)$ to be the point $i(x)$.
For $x\in S(f)$, we consider the component of $S(f)$ containing $x$, and define the point $y\in S(F|_{\partial N})$ such that $F(y)=f(x)$ to be the point $i(x)$.
Note that even when $f(x)$ is a vertex of $f(S(f))$, the point $y$ is uniquely defined by considering the component of $S(f)$ containing $x$.
Then, we naturally obtain an embedding $i\colon M\times [0,1)\to N$ from $\tilde{i}$. 
Furthermore, according to the construction of $F$ and the definition of $\tilde{i}$, it follows that $F\circ\tilde{i}=g$ holds. 
Therefore, $F$ is a non-singular extension of $g$.  
\qed

\begin{rem}
Stable maps have cusp points as their singular points in general. 
The above method is valid for such a case. 
\end{rem}

\begin{rem}
There exist submersions as in \textrm{Theorem\;4.1} that admit several non-singular extensions. 
For example, the submersion $g\colon ((S^1\times S^1)\sqcup S^2\sqcup S^2)\times [0,1)\to\mathbb{R}^2$ depicted in \textrm{Figure\;12} admits the non-singular extensions $F_1\colon ((S^1\times D^2)\setminus \Int B^3)\sqcup D^3\to\mathbb{R}^2$ and 
$F_2\colon (S^1\times D^2)\sqcup (D^3\setminus \Int B^3)\to\mathbb{R}^2$
shown in \textrm{Figure\;13}. 
However, it is open how many non-singular extensions a given map may admit.

\begin{figure}[t]
    \centering
    \includegraphics[width=70mm]{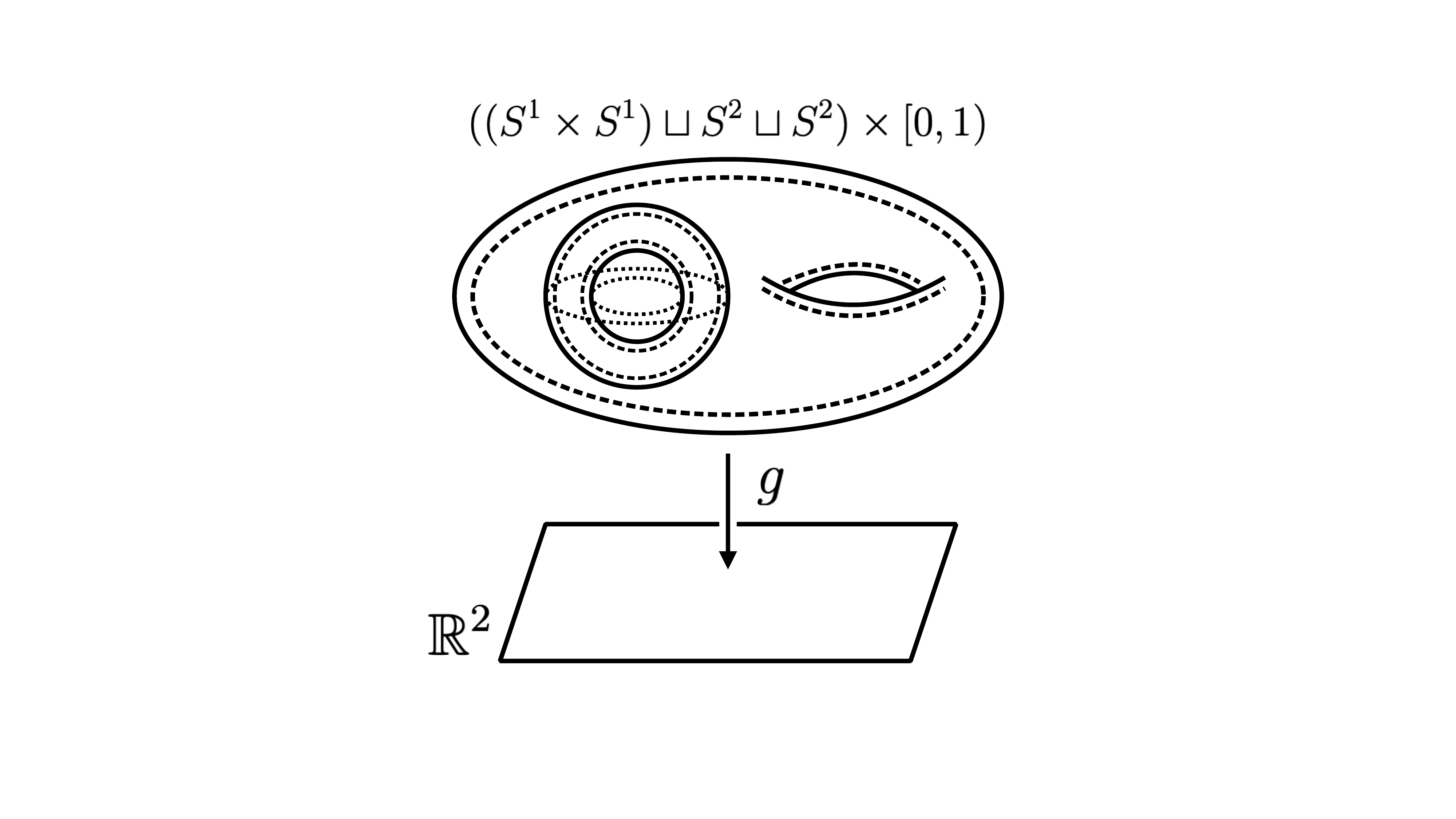}
    \caption{
    The submersion $g$. 
    }
\end{figure}

\begin{figure}[t]
    \centering
    \includegraphics[width=110mm]{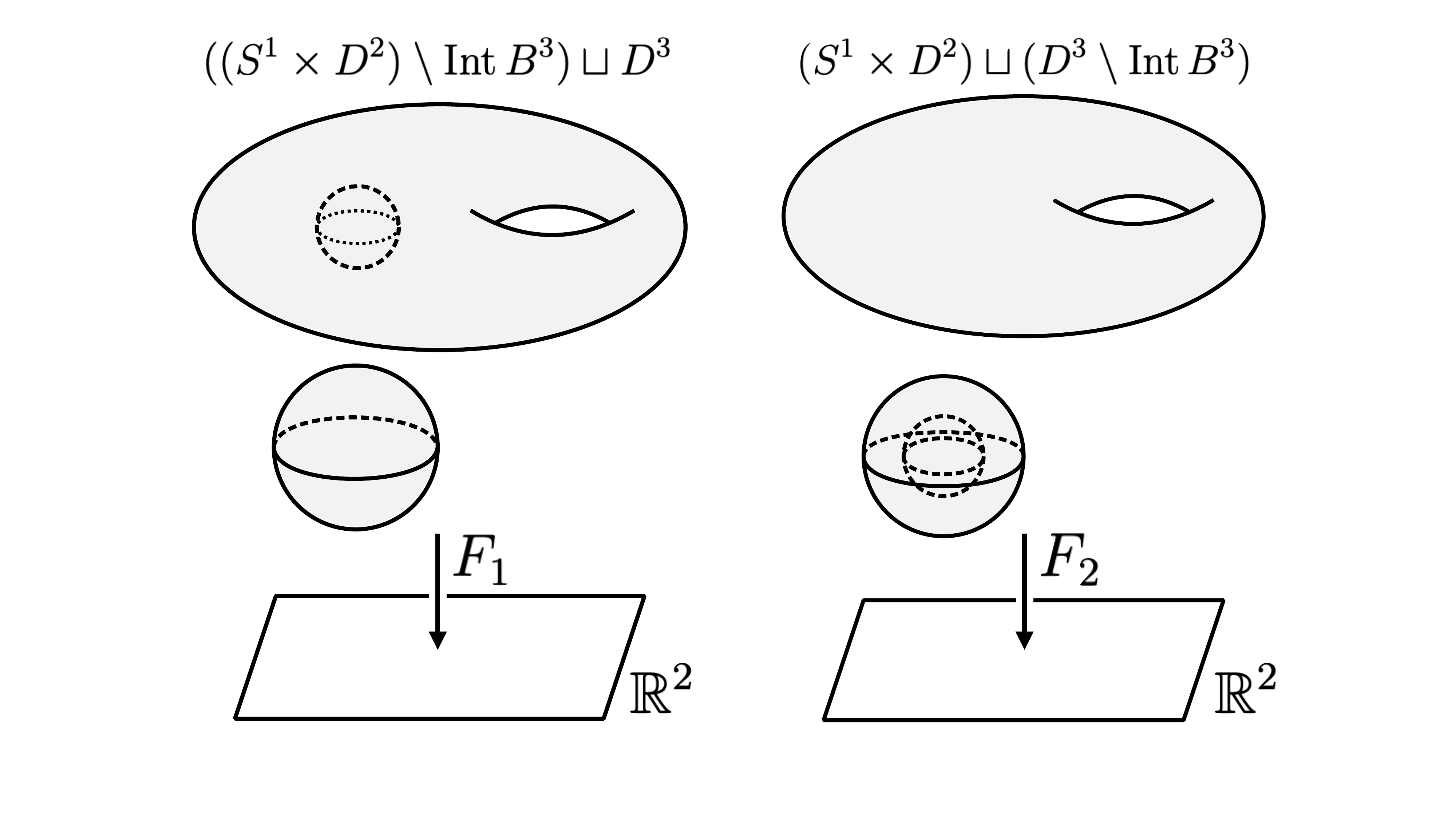}
    \caption{
    The non-singular extension $F_1$ and $F_2$ of $g$. 
    }
\end{figure}
\end{rem}

\section{Properties of the source manifolds of extensions}
In \textrm{Theorem\;4.1}, we obtained a necessary and sufficient condition for a given map to admit a non-singular extension. 
On the other hand, for applications, it is important to understand what properties such a non-singular extension possesses and what constraints its extension imposes on the original map. 
In this section, we present several results from these two perspectives. 
Recall that in this paper, a non-singular extension is a submersion that is a horizontal stable fold map on the boundary.
The results below concern such submersions.
We begin with a result on the Euler characteristic.

\begin{prop}
Let $N$ be a compact orientable $3$-dimensional manifold with non-empty boundary, and $F\colon N\to\mathbb{R}^2$ be a submersion that is a horizontal stable fold map on the boundary, with $F|_{S(F|_{\partial N})}$ having no double points. 
Then, we have
\begin{equation*}
\chi(N)=\sum_{r\in R\in\mathcal{R}_f}\chi(R)\chi(F^{-1}(r)).
\end{equation*}
\end{prop}

\proof
As stated in the proof of \textrm{Theorem\;4.1}, we can decompose $\mathbb{R}^2$ into the union of $K$ and $\bigsqcup_{R\in\mathcal{R}_f}L_{R}$.
By Ehresmann's fibration theorem\;\cite{E}, each map $F^{-1}(L_R)\to L_R$ is a fiber bundle for every $R\in\mathcal{R}_f$. 
By this decomposition, the manifold $N$ is decomposed as 
\begin{equation*}
N=F^{-1}(K)\cup\bigsqcup_{R\in\mathcal{R}_f}F^{-1}(R). 
\end{equation*}
By the local behavior of $F$ around each point of $S(F|_{\partial N})$ in \textrm{Proposition\;2.5}, we have $\chi(F^{-1}(K))=0$. 
Furthermore, since each $F^{-1}(R)$ is a fiber bundle, we obtain $\chi(F^{-1}(R))=\chi(R)\chi(F^{-1}(r))$ for any point $r\in R$. 
Moreover, each intersection $F^{-1}(K)\cap F^{-1}(R)$ is the disjoint union of fiber bundles over $S^1$ with the fiber $[-1,1]$ or $S^1$, both of which have $0$ as the Euler characteristic.
Therefore, applying the formula associated with the Euler characteristics of unions, we obtain the desired equation. 
\qed
\vspace{10pt}

We next present a result concerning the fundamental group. 
To enable its computation, we assume that $N$ is a compact connected $3$-dimensional manifold with boundary, that the submersion $F\colon N\to\mathbb{R}^2$ has no circle component in its fibers, and that the restriction $F|_{S(F|_{\partial N})}$ to the singular point set has no double points. 
\vspace{0pt}

\begin{defi}
We define the graph $H_F$ for $F$ as follows. 
We define its vertices by corresponding one vertex to each element of $\mathcal{R}_f$. 
We define its edge so that two vertices are joined when their corresponding elements in $\mathcal{R}_f$ are adjacent. 
Furthermore, each edge is labeled and directed as follows. 
We define each label with $\I$ or $\II$ according to whether the corresponding component of the singular point set of $F|_{\partial N}$ is type\;$\I$ or type\;$\II$. 
The direction is defined as the one in which the number of connected components of the fiber of $f$ increases. 
\end{defi}
We note that the graph $H_F$ is a tree, and recall that the vertices of degree one in trees are called \textit{leaves}.
We denote by $\mathcal{L}$ the set of leaves of $H_F$. 
\vspace{1pt}

Since $N$ is compact, there exists a unique non-bounded region $R_0\subset\mathbb{R}^2\setminus F(N)$, and we denote the vertex of $H_F$ corresponding to $R_0$ by $v_0$. 
We define the integers $e_1,e_2\geq 0$ as follows.   
From the vertex of $H_F$ corresponding to $R_0$, each leaf is reached via a simple path in $H_F$.
For each such path, we count the number of edges that are labeled with $\I$ and oriented in the same direction as the path.
We denote by $e_1$ the number of distinct edges that appear in the collection of all simple paths from the vertex corresponding to $R_0$ to each leaf, counting each edge only once, even if it appears in multiple paths.
Similarly, $e_2$ counts the distinct edges with $\II$, directed opposite to the path, that appear in all paths from the vertex corresponding to $R_0$ to the leaves.
We next state the result concerning the fundamental group.

\begin{prop}
Let $N$ be a compact connected orientable $3$-dimensional manifold with non-empty boundary. 
Let $F\colon N\to\mathbb{R}^2$ be a submersion without $S^1$ components in its fibers that is a horizontal stable fold map $f$ on the boundary without a double point of $f|_{S(f)}$. 
Then, we have the isomorphism
\begin{equation*}
\pi_1(N,x_0)\cong\mathbb{Z}^{r},
\end{equation*}
where 
\begin{equation*}
r=e_1-e_2-\sum_{R\in\mathcal{L}}\sharp F^{-1}(R).
\end{equation*}
\end{prop}

\proof
We decompose $N$ as in \textrm{Proposition\;5.1}.
By attaching $\bigsqcup_{R\in\mathcal{R}_f} F^{-1}(R)$ to $F^{-1}(K)$, we compute the fundamental group of $N$ using the Seifert--van Kampen theorem. 
Each attachment corresponds to an edge of the graph $H_F$. 
We divide the discussion into two cases, depending on whether the corresponding edge has a leaf, other than $v_0$, as an endpoint. 
First, we consider the case where the edge does not have a leaf, except for $v_0$, as an endpoint. 
In this case, by \textrm{Proposition\;2.5}, we know the behavior of the fundamental group according to the attachment explicitly: the rank changes by $e_1-e_2$. 
Next, we consider another case. 
Each element of $\mathcal{R}_f$ corresponding to such a leaf is homeomorphic to $D^2$, and its preimage under $F$ is homeomorphic to the disjoint union of $3$-disks. 
Thus, for the leaves, except for $v_0$, the change of the rank of the fundamental group is $\sum_{R\in\mathcal{L}}\sharp F^{-1}(R)$. 
From these discussions, the rank of the fundamental group of $N$ is given by 
$$r=e_1-e_2-\sum_{R\in\mathcal{L}}\sharp F^{-1}(R),$$ 
and hence $\pi_1(N,x_0)\cong\mathbb{Z}^r$, where $x_0\in N$ is a point. 
\qed
\vspace{10pt}

We next present a result from a different perspective.
It concerns the properties that a map may have if it admits a non-singular extension.
However, in the following corollary, we abuse the term ``non-singular extension" of $f$ to mean a submersion that bounds a map $f$.

\begin{cor}
Let $M$ be a closed orientable surface and $f\colon M\to\mathbb{R}^2$ be a horizontal stable fold map. 
Assume that $f$ admits a non-singular extension of a connected $3$-dimensional manifold whose fibers contain no $S^1$-components and for which $r=0$.
Then, $M$ is homeomorphic to the disjoint union of $2$-spheres.
\end{cor}

\proof
From \textrm{Proposition\;5.3} and the assumptions, the source manifold $N$ of the non-singular extension of $f$ is simply connected. 
Then, $N$ is homeomorphic to a closed 3-dimensional ball with finitely many open 3-dimensional balls removed.  
Therefore, the boundary $M=\partial N$ is homeomorphic to a disjoint union of finitely many $2$-spheres. 
\qed

\section*{Aknowledgement}
The author would like to thank Osamu Saeki for his useful discussions and comments. 
The author thanks Noriyuki Hamada and Naoki Kitazawa for their comments and support. 
This work has been partially supported by JSPS KAKENHI Grant Number JP23H05437 and by WISE program (MEXT) at Kyushu University.
Furthermore, we appreciate the referee's helpful comments that have improved this paper.

\end{document}